\newtheorem{thm}{Theorem}[section]
\newtheorem{lemma}[thm]{Lemma}
\newtheorem*{claim}{Claim}
\theoremstyle{definition}
\newtheorem{definition}[thm]{Definition}
\newtheorem{example}[thm]{Example}
\newtheorem{remark}[thm]{Remark}
\newtheorem{conj}[thm]{Conjecture}
\newcommand{\Hmm}[1]{\leavevmode{\marginpar{\tiny%
$\hbox to 0mm{\hspace*{-0.5mm}$\leftarrow$\hss}%
\vcenter{\vrule depth 0.1mm height 0.1mm width \the\marginparwidth}%
\hbox to 0mm{\hss$\rightarrow$\hspace*{-0.5mm}}$\\\relax\raggedright
#1}}}
\begin{document}
\title[An upper bound for eigenvalue ratios]{An optimal dimension-free upper bound \\for eigenvalue ratios}
\author{Shiping Liu}
\address{Department of Mathematical Sciences, Durham University, DH1 3LE Durham, United Kingdom}
\email{shiping.liu@durham.ac.uk}
\begin{abstract}
On a closed weighted Riemannian manifold with nonnegative Bakry-\'{E}mery Ricci curvature, it is shown that the ratio of the $k$-th to the first eigenvalue of the weighted Laplacian is dominated by $641k^2$. While improving the previously known exponential upper bound, the order of $k$ here is optimal. This answers an open question of Funano. Our approach further proves affirmatively a conjecture of Funano and Shioya asserting a dimension free upper bound for eigenvalue ratios on a compact finite-dimensional Alexandrov space of nonnegative curvature.

\smallskip
\noindent \textsc{Keywords.} Eigenvalues of Laplacian, Ricci curvature, Cheeger constant, dimension-free estimate, Alexandrov space.

\smallskip
\noindent \textsc{MSC.} 53C23, 58J50, 35P15.
\end{abstract}

\maketitle

\section{Introduction}
Let $(M,\mu)$ be a closed weighted Riemannian manifold, where $\mu$ is a Borel probability measure of the form $d\mu=e^{-V}d\text{vol}_M$, $V\in C^2(M)$ and $\text{vol}_M$ stands for the Riemannian volume measure of $M$. In this paper, we study the eigenvalues of the corresponding weighted Laplacian $\Delta_{\mu}$, which can be listed with multiplicity as below (see e.g. \cite{Setti98}),
\begin{equation*}
0=\lambda_0(M,\mu)<\lambda_1(M, \mu)\leq \lambda_2(M,\mu)\leq\cdots\leq\lambda_k(M, \mu)\leq\cdots\nearrow\infty.
\end{equation*}
We prove the following theorem.
\begin{thm}\label{thm:Main}
 For any closed weighted Riemannian manifold $(M,\mu)$ of nonnegative Bakry-\'{E}mery Ricci curvature and any natural number $k$, we have
\begin{equation}\label{eq:MainUpper}
\lambda_k(M,\mu)\leq\left(\frac{16e}{e-1}\right)^2k^2\lambda_1(M,\mu).
\end{equation}
\end{thm}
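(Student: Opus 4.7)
\emph{Overall strategy.} My plan is to combine a Cheeger-type reduction with a multi-way isoperimetric construction that exploits the nonnegativity of the Bakry-\'{E}mery Ricci curvature. The dimension-free character of the estimate, coupled with the optimality of the exponent $2$ (saturated by the circle, where $\lambda_k = k^2 \lambda_1$), suggests a tight geometric argument built around Ledoux's heat-semigroup proof of Buser's inequality rather than any intrinsically dimensional tool.

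\emph{Implementation.} Cheeger's inequality gives $\lambda_1(M,\mu) \geq h(M,\mu)^2/4$, where $h(M,\mu)$ denotes the Cheeger isoperimetric constant of $(M,\mu)$. It therefore suffices to prove a multi-way bound of the shape $\lambda_k(M,\mu) \leq C k^2 h(M,\mu)^2$. To that end I would produce $k+1$ pairwise disjoint Borel sets $A_0, A_1, \ldots, A_k$ such that each has expansion $\mu^+(\partial A_i)/\mu(A_i) \leq C k h(M,\mu)$, via iterated bisection: at each stage, split the current region along a near-minimiser of its local Cheeger functional, and recurse. Attaching Maz'ya-style cut-off test functions $\phi_i$ supported in each $A_i$, one obtains $k+1$ functions whose linear span, together with the constant, is a $(k+2)$-dimensional subspace on which every Rayleigh quotient is at most $C k^2 h(M,\mu)^2$; the Courant-Fischer min-max principle then delivers the desired estimate on $\lambda_k$, and Cheeger's lower bound trades $h^2$ for $\lambda_1$.

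\emph{Main obstacle.} The crux is the partitioning step: without curvature, each bisection loses a uniform multiplicative constant, so the $(k+1)$-way isoperimetric constant degenerates exponentially in $k$ -- precisely the mechanism behind the $\exp(Ck)$ bound of Funano. The new ingredient must be a \emph{uniform} Bakry-Ledoux/Buser-type inequality that persists on every subregion encountered during the recursion, together with the concavity of the isoperimetric profile under nonneg BE Ricci. These should cooperate so that expansion losses accumulate only additively along the bisection tree, producing the linear factor $k$, and hence the optimal $k^2$ after squaring. The explicit constant $16e/(e-1)$ in the statement strongly suggests that the optimisation is carried out over the Ledoux heat-semigroup time parameter $t \sim 1/\lambda_1$, where the term $1 - e^{-1}$ arises naturally; fine-tuning this choice uniformly across the recursion is what I expect to be the most delicate point of the proof.
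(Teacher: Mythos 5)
Your high-level strategy — route the comparison of $\lambda_k$ and $\lambda_1$ through a Cheeger constant, with Ledoux's heat-semigroup Buser inequality producing the factor $e/(e-1)$ — is on target, and you correctly read off from the constant $16e/(e-1)$ that Ledoux's inequality is involved. However, the arrow you draw between the Cheeger constant and the spectrum points the wrong way, and the step you identify as the crux is not the one the paper resolves.

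The paper's argument uses the Cheeger constant $h_1(M,\mu)$ as a bridge in the following sense: it bounds $h_1$ from \emph{above} by $8\sqrt{2}\,k\,\lambda_1/\sqrt{\lambda_k}$ (an ``improved Cheeger inequality'', adapted from Kwok--Lau--Lee--Oveis Gharan--Trevisan's graph result, Theorem~\ref{thm:ImproveCheeger}), and from \emph{below} by $\frac{e-1}{\sqrt{2}e}\sqrt{\lambda_1}$ (Ledoux's dimension-free Buser inequality, Theorem~\ref{thm:LedouxBuser}). Chaining these gives $\lambda_k\leq (16e/(e-1))^2k^2\lambda_1$ in one line. Crucially, the improved Cheeger inequality holds on \emph{every} closed weighted manifold, with no curvature hypothesis: it is proved by approximating a first eigenfunction (or rather its positive/negative parts) by a $(2k+1)$-step function $g_k$ satisfying $\|f-g_k\|_{L^2}^2\leq \tfrac{2}{\lambda_k}\int|\nabla f|^2$ — the existence of such an approximation is extracted from the min-max principle by contradiction: if the greedy thresholding procedure fails to terminate, one produces $k+1$ disjointly supported test functions with Rayleigh quotient strictly below $\lambda_k$. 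Nonnegative curvature enters \emph{only} through Buser's inequality.

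Your proposal inverts this. You want to use the classical Cheeger lower bound $\lambda_1\geq h^2/4$ (curvature-free) for the $h$–$\lambda_1$ bridge, and instead prove $\lambda_k\leq Ck^2h^2$ via an iterated bisection producing $k+1$ sets with expansion $\lesssim k\,h_1$; you locate the curvature-dependent step in that recursion. This is the genuine gap: the bound $\lambda_k\leq Ck^2 h_1^2$ with a dimension-free $C$ is itself equivalent in strength to the theorem (after the trivial Cheeger step), and you give no mechanism by which nonnegative Bakry--\'{E}mery curvature converts the multiplicative per-level loss in the bisection tree into an additive one. The sketched ingredients — ``uniform Buser on every subregion'' and ``concavity of the isoperimetric profile'' — do not by themselves control the \emph{relative} Cheeger constant of a subregion carved out by an earlier bisection in terms of the global $h_1$, which is exactly where Funano's earlier argument already incurs an exponential factor. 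In short, you have correctly diagnosed where the difficulty lies and what the constant $e/(e-1)$ signals, but the missing idea is the improved Cheeger inequality (the step-function $L^2$-approximation of the first eigenfunction controlled by $\lambda_k$), which sidesteps multi-way partitioning entirely and removes curvature from that half of the argument.

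Two secondary issues: first, even granting a partition into $k+1$ sets of small expansion, passing to cut-off test functions with Rayleigh quotient $\lesssim \phi(A_i)^2$ requires more than the boundary-measure-to-volume ratio $\phi(A_i)$; it needs Dirichlet eigenvalue control on each piece (a local Cheeger inequality), and you would have to account for that extra inequality's constant as well. Second, the multi-way isoperimetric ratio bound $h_k\lesssim k\sqrt{\log(1+k)}\,h_1$ in this paper (Theorem~\ref{thm:MIsoperiratio}) is a \emph{consequence} of the main theorem, not an input — so a proof of Theorem~\ref{thm:Main} that begins by constructing such a partition risks circularity unless the partition is produced by independent means.
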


For $(M,\mu)$ as stated in the above theorem, it is known from the work of Cheng \cite{Cheng75} and Li-Yau \cite{LY80} (see also Setti \cite{Setti98}) on estimating eigenvalues by diameter that there exists a numeric constant $C$ such that
\begin{equation}\label{eq:dimensionUpper}
\lambda_k(M,\mu)\leq Cn^2k^2\lambda_1(M,\mu),
\end{equation}
where $n$ is the dimension of $M$. The first dimension-free estimate of these eigenvalue ratios was discovered by Funano and Shioya \cite{FS2013}
through investigating spectral characterizations of L\'{e}vy families. They proved that there exists a constant $C_k$ depending only on $k$
such that
\begin{equation}\label{eq:FunanoShioya}
\lambda_k(M,\mu)\leq C_k\lambda_1(M,\mu),
\end{equation}
and formulated the following conjecture
(see \cite[Conjecture 6.11]{FS2013} and also \cite[Conjecture 6.6]{Funano2013}).
\begin{conj}[Funano and Shioya]\label{conj:FS} For any natural number $k$, there exists a positive constant $C_k$ depending only on $k$ such that if $X$ is a compact finite-dimensional Alexandrov space of nonnegative curvature, then
\begin{equation}
\lambda_k(X)\leq C_k\lambda_1(X),
\end{equation}
where $\lambda_k(X)$ is the $k$-th non-zero eigenvalues of the corresponding Laplacian on $X$ \cite{KMS01}.
\end{conj}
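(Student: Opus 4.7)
The strategy is to transport the route used to establish Theorem~\ref{thm:Main} from the weighted Riemannian setting to a compact finite-dimensional Alexandrov space $X$ of nonnegative curvature. The manifold proof almost certainly factors through the multi-way Cheeger constant $h_k(X)$ along three steps: a Miclo-type higher-order Cheeger inequality $\lm_k(X)\leq C_1 k^2 h_k(X)^2$, a Buser-type inequality $h_1(X)^2\leq C_2\lm_1(X)$ that exploits the curvature hypothesis, and a curvature-sensitive comparison $h_k(X)\leq C_3 k\, h_1(X)$. Chaining these yields $\lm_k(X)\leq C_1 C_2 C_3^2 k^2\lm_1(X)$, which is already stronger than the dimension-free bound demanded by the conjecture.

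My first step is to fix the analytic framework on $X$. The intrinsic Laplacian of Kuwae-Machigashira-Shioya~\cite{KMS01} is already singled out in the statement, and by the theorem of Zhang-Zhu any $n$-dimensional Alexandrov space of nonnegative curvature is an $\mathrm{RCD}(0,n)$ metric measure space on which the two natural Laplacians agree. Inside $\mathrm{RCD}(0,N)$ one has a functional Bakry-\'{E}mery $\Gm_2$-calculus, which should drive the Buser-type inequality, and a mature isoperimetric and localization theory (Cavalletti-Mondino needle decomposition), which should drive the comparison between multi-way and one-way Cheeger constants. The higher-order Cheeger inequality itself is essentially metric and is already known on general metric measure spaces, so this ingredient transfers verbatim.

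The principal obstacle I anticipate is the curvature-sensitive comparison $h_k(X)\leq C_3 k\, h_1(X)$: this is the step where nonnegative Ricci actually enters, and in the smooth case is presumably proved by a concavity or isoperimetric-profile argument relying on smooth sublevel-set analysis. On an Alexandrov space the regular part is open of full measure but globally non-smooth, and singular points must be accommodated. The needle decomposition in $\mathrm{RCD}(0,N)$ looks like the right tool: it reduces the $k$-way isoperimetric question to families of one-dimensional model densities which, under nonnegative curvature, are log-concave, and a short one-dimensional partitioning argument compares the $k$-way cut value to the one-way Cheeger constant with a multiplicative factor linear in $k$. Once this ingredient is secured, the three estimates combine to prove the conjecture with an explicit dimension-free constant of the same flavour as the $\left(\tfrac{16e}{e-1}\right)^2$ appearing in Theorem~\ref{thm:Main}.
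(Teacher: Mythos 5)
Your proposed factorization through the multi-way Cheeger constant $h_k$ is not the route the paper takes, and as proposed it contains both a logical circularity and an exponent mismatch.

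The paper never compares $h_k$ to $h_1$ en route to Theorem~\ref{thm:AleNonnegative}. Its chain is entirely through $h_1$, $\lambda_1$ and $\lambda_k$: the key new ingredient is the \emph{improved Cheeger inequality} (Theorem~\ref{thm:ImproveCheeger}, after Kwok et al.~\cite{KLLGT2013}),
\begin{equation*}
h_1 \leq 8\sqrt{2}\,k\,\frac{\lambda_1}{\sqrt{\lambda_k}},
\end{equation*}
which holds with no curvature assumption at all and directly couples the \emph{first} Cheeger constant to $\lambda_k$. Combining this with Ledoux's Buser inequality $h_1 \geq \frac{e-1}{\sqrt{2}e}\sqrt{\lambda_1}$ (the only place curvature is used) yields $\lambda_k \leq \bigl(\tfrac{16e}{e-1}\bigr)^2 k^2\lambda_1$ in one line. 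The extension to a compact finite-dimensional Alexandrov space of nonnegative curvature then requires only soft transfers: density of Lipschitz functions in $W^{1,2}$, the Bobkov--Houdr\'e co-area inequality, the min-max principle (all curvature-free, so Theorem~\ref{thm:ImproveCheeger} extends to any Alexandrov space with curvature bounded below), and the Gigli--Kuwada--Ohta heat-kernel gradient estimate for the Buser--Ledoux step. No needle decomposition or Cavalletti--Mondino localization is invoked.

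Your chain $\lambda_k \lesssim k^2 h_k^2$, $h_k \lesssim k\, h_1$, $h_1^2 \lesssim \lambda_1$ has two defects. First, substituting gives $\lambda_k \lesssim k^4 \lambda_1$, not $k^2$ as you claim, so even if every piece were in hand you would lose the sharp order. Second, and more seriously, neither the higher-order Buser-type bound $\lambda_k \lesssim k^2 h_k^2$ nor the comparison $h_k \lesssim k\, h_1$ is available a priori with polynomial dependence on $k$: in the pre-existing literature (Funano~\cite{Funano2013}) these hold only with exponential constants $e^{ck}$, and the polynomial versions appear in the present paper precisely as \emph{corollaries} of the main theorem (Theorems~\ref{thm:higerBuserLedoux} and~\ref{thm:MIsoperiratio}). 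Using them as inputs would therefore be circular. The missing idea in your outline is exactly the improved Cheeger inequality of Kwok et al., which lets one bypass $h_k$ entirely.
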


Later, Funano \cite{Funano2013} proved a quantitative version of (\ref{eq:FunanoShioya}), showing that one can find a numeric constant $c>0$ such that $C_k$ in (\ref{eq:FunanoShioya}) can be taken as $$C_k=e^{ck}.$$ Funano further asked for the right order of $\lambda_k(M,\mu)/\lambda_1(M,\mu)$ in $k$, especially whether this ratio can be dominated by a polynomial function of $k$ (see \cite[Question 6.3]{Funano2013}).

Theorem \ref{thm:Main} answers Funano's question. In fact, the order of $k$ in (\ref{eq:MainUpper}) is optimal. This can be seen from the following examples.
\begin{example}\label{example}
 The eigenvalues of the circle $T_a^1$ of length $a$ are
\begin{equation*}
\left\{\frac{4\pi^2k^2}{a^2}: k=0,1,2,\ldots\right\},
\end{equation*}
where each non-zero eigenvalue has multiplicity $2$.
\end{example}

This examples shows the optimality of the order of $k$ in (\ref{eq:MainUpper}) for one dimensional manifolds.
\begin{example}\label{exampleThinTorus}
For $n\geq 2$, consider the $n$ dimensional "thin" torus $T_a^n:=\mathbb{R}^n/\Gamma_a^n$, where 
\begin{equation*}
\Gamma_a^n:=(a\mathbb{Z})^{n-1}\times \frac{1}{a^{n-1}}\mathbb{Z},
\end{equation*}
and $a\in (0,1)$. The dual lattice $(\Gamma_a^n)^*$ of $\Gamma_a^n$ is 
\begin{equation*}
(\Gamma_a^n)^*:=\{\gamma^*\in \mathbb{R}^n: \langle \gamma^*, \gamma\rangle\in \mathbb{Z}, \,\,\forall \gamma\in \Gamma\}=\left(\frac{1}{a}\mathbb{Z}\right)^{n-1}\times a^{n-1}\mathbb{Z}.
\end{equation*}
Then all the eigenvalues of $T_a^n$ are $\{4\pi^2|\gamma^*|^2, \,\,\forall\, \gamma^*\in (\Gamma_a^n)^*\}$  (see e.g. \cite[Section II.2]{Chavel}, \cite[Section 4.E.2]{GHLRieGeo}). Listing them monotonically with multiplicity, we have
\begin{align*}
\lambda_0(T_a^n)=0,\,\,\,& \lambda_{2m-1}(T_a^n)=\lambda_{2m}(T_a^n)=4\pi^2m^2a^{2(n-1)}, \,\,1\leq m\leq \left\lfloor\frac{1}{a^n}\right\rfloor, \\ &\lambda_{2\left\lfloor\frac{1}{a^n}\right\rfloor+1}(T_a^n)=\lambda_{2\left\lfloor\frac{1}{a^n}\right\rfloor+2}(T_a^n)=4\pi^2\frac{1}{a^2}.
\end{align*}
\end{example}
This example indicates that, for any natural number $n\geq 2$, any absolute constant $C>0$, and any number $\epsilon>0$, there exist an $n$ dimensional torus $T_a^n$ where $0<a<(((9C)^{1/\epsilon}+1)/2)^{-1/n}$, and a natural number $k=2\lfloor1/a^n\rfloor+1$, such that
\begin{equation}
\frac{\lambda_k(T_a^n)}{\lambda_1(T_a^n)}=\frac{1}{a^{2n}}\geq \frac{k^2}{9}>Ck^{2-\epsilon}.
\end{equation}
This shows the optimality of the order of $k$ in (\ref{eq:MainUpper}) for manifolds of arbitrary dimension.

It is worth to mention here Weyl's asymptotic formula (see e.g. \cite{Chavel}, \cite{Grigoryan06}). It reads
\begin{equation}
 \lambda_k(M,\mu) \sim c(n)\left(\frac{k}{\mu(M)}\right)^{\frac{2}{n}} \text{ as }k\rightarrow \infty,
\end{equation}
where $c(n)$ is a constant depending on dimension $n$.
Note further that while $Cn^2$ in (\ref{eq:dimensionUpper}) explodes as the dimension $n$ increases, the constant in our estimate (\ref{eq:MainUpper}) is fixed and smaller than $641$. We would also like to mention Funano and Shioya's observation that the nonnegativity of Ricci curvature is necessary (see \cite[Example 4.9]{FS2013}).

Moreover, our approach is extendable to a very general setting (see Section \ref{subsection:Exten}). In particular, we have the following result.
\begin{thm}\label{thm:AleNonnegative}
For any compact finite-dimensional Alexandrov space $X$ of nonnegative curvature and any natural number $k$, we have
\begin{equation}\label{eq:AlexUpper}
 \lambda_k(X)\leq \left(\frac{16e}{e-1}\right)^2k^2\lambda_1(X).
\end{equation}
\end{thm}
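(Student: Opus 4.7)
The plan is to reproduce the three-step proof of Theorem \ref{thm:Main} in the Alexandrov category. Introduce the $k$-way Cheeger constants $h_k(X)$ and pass through the chain
\begin{equation*}
\lambda_k(X) \;\lesssim\; f(k)\, h_k(X)^2 \;\lesssim\; g(k)\, h_1(X)^2 \;\lesssim\; g(k)\,\lambda_1(X),
\end{equation*}
built from (i) a higher-order Cheeger inequality, (ii) a combinatorial comparison $h_k \leq O(k)\, h_1$, and (iii) a Buser-type inequality on nonnegatively curved spaces. If the same sharp constants from the Riemannian proof can be threaded through, the product should come out to exactly $(16e/(e-1))^2 k^2$.

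First I would fix the analytic framework on $X$. The canonical measure is the $n$-dimensional Hausdorff measure, the Cheeger energy is a strongly local regular Dirichlet form whose generator coincides with the Laplacian of Kuwae--Machigashira--Shioya, and BV functions with a co-area formula are available. With this setup, $\lambda_k(X)$ and $h_k(X)$ are both well defined, and step (i) becomes a Rayleigh quotient / test-function argument formally identical to its smooth counterpart, requiring only the co-area inequality and a disjoint-support decomposition. Step (ii) is curvature-free: starting from a near-optimal bisection achieving $h_1(X)$, one iterates the construction inside the two resulting pieces to build a $k$-partition whose worst boundary-to-volume ratio is $O(k\, h_1(X))$. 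Once a reasonable perimeter functional is in hand, which it is here, this argument translates verbatim.

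The main obstacle is step (iii), the Buser inequality. Its smooth proof rests on Ledoux's heat-semigroup argument, which in turn uses a pointwise Bakry--\'Emery $\Gamma_2$-criterion. Pointwise second-order calculus is unavailable on a singular Alexandrov space, but the needed global consequence --- a dimension-free $\Li$-to-$\Li$ contraction of the heat semigroup with exponent coming from the lower Ricci bound --- follows from the $BE(0,\infty)$ property that nonnegatively curved finite-dimensional Alexandrov spaces are known to satisfy, through Petrunin's second variation comparison together with the identification of the intrinsic heat flow as the gradient flow of the Cheeger energy. Once this contraction is in place, Ledoux's argument transfers essentially verbatim to produce the Buser inequality with the same absolute constant as in Theorem \ref{thm:Main}. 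The chain then closes and delivers the bound \eqref{eq:AlexUpper}; a final technical point worth checking is that the approximation step in Ledoux's method (differentiating the semigroup of a regularized indicator) still produces the correct boundary term, which should follow from the fact that the singular stratum of $X$ has Hausdorff codimension at least one.
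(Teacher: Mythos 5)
Your chain
\[
\lambda_k(X) \lesssim f(k)\, h_k(X)^2 \lesssim g(k)\, h_1(X)^2 \lesssim g(k)\,\lambda_1(X)
\]
is not the paper's route, and two of its three links do not hold as you describe them. The paper never passes through $h_k$. Its proof combines exactly two inequalities, both stated at the level of $h_1$: the \emph{improved Cheeger inequality} of Theorem \ref{thm:ImproveCheeger}, $h_1(X)\leq 8\sqrt{2}\,k\,\lambda_1(X)/\sqrt{\lambda_k(X)}$, and the Buser--Ledoux inequality of Theorem \ref{thm:LedouxBuser}, $h_1(X)\geq \frac{e-1}{\sqrt{2}e}\sqrt{\lambda_1(X)}$; multiplying these gives $\sqrt{\lambda_k}\leq\frac{16ek}{e-1}\sqrt{\lambda_1}$ immediately. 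The improved Cheeger inequality --- a nonstandard three-term inequality tying $h_1$, $\lambda_1$, and $\lambda_k$ together, proved in Section \ref{Section:proof} by the step-function approximation of Kwok et al. --- is the entire novelty of the paper, and your proposal does not mention it at all. Without it there is no way to recover the factor $k^2$ with the advertised constant, which is literally $\bigl(8\sqrt{2}\cdot\frac{\sqrt{2}e}{e-1}\bigr)^2 = \bigl(\frac{16e}{e-1}\bigr)^2$.

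Beyond missing the key lemma, your step (ii) is wrong as stated. The claim that a "curvature-free" iterated bisection of a near-optimal Cheeger cut yields a $(k{+}1)$-partition with $\max_i\phi(A_i)=O(k\,h_1)$ is false: the ratio $h_k/h_1$ is unbounded without curvature hypotheses (a thin-necked dumbbell has tiny $h_1$ but $h_2$ of order one, and subdividing one lobe of the bisection gives no control on the new boundary-to-volume ratio). In the paper, $h_k\lesssim k\sqrt{\log(1+k)}\,h_1$ is Theorem \ref{thm:MIsoperiratio}, which is an \emph{application} of the main theorem (together with the shifted higher-order Cheeger inequality and Buser--Ledoux) and uses the nonnegative-curvature assumption; it is not an input. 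Your step (i) has a direction problem as well: an estimate $\lambda_k\lesssim f(k)h_k^2$ is of Buser type and needs curvature to control the volume of neighborhoods of the test sets; the co-area inequality and disjoint-support decomposition you invoke prove the opposite direction $h_k\lesssim C(k)\sqrt{\lambda_k}$. Your discussion of extending the Buser inequality to Alexandrov spaces via heat-flow/gradient-estimate arguments is broadly in line with what the paper does (it cites Funano's Remark 4.5 and the Gigli--Kuwada--Ohta Bakry--\'Emery gradient estimate), but that is the only step of your outline that matches; the rest needs to be replaced by the paper's actual two-lemma argument.
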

This verifies a strong version of the Conjecture \ref{conj:FS} of Funano and Shioya.
\subsection{An approach via Cheeger constant}

We prove Theorem \ref{thm:Main} by relating $\lambda_1(M,\mu)$ to $\lambda_k(M,\mu)$ via the Cheeger constant $h_1(M,\mu)$.

The first ingredient comes from a recent progress of Kwok et al. \cite{KLLGT2013} in theoretical computer science. In order to justify a well-known eigengap heuristic in the empirical performance of the spectral clustering algorithm, they proved a so-called \emph{improved Cheeger inequality} for finite graphs. They also indicated a corresponding inequality for closed Riemannian manifolds. But their constant depends on the dimension of the manifold. We prove the following dimension-free inequality for weighted Riemannian manifolds (see Section \ref{Section:proof}).

\begin{thm}\label{thm:ImproveCheeger}
On a closed weighted Riemannian manifold $(M,\mu)$, we have for any natural number $k$,
\begin{equation}\label{eq:ImprovedCheeger}
h_1(M,\mu)\leq 8\sqrt{2}k\frac{\lambda_1(M,\mu)}{\sqrt{\lambda_k(M,\mu)}}.
\end{equation}
\end{thm}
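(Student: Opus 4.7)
The plan is to adapt to closed weighted Riemannian manifolds the improved Cheeger inequality for graphs of Kwok--Lau--Lee--Gharan--Trevisan, retaining the combinatorial core of their proof while removing the dimension dependence that appeared in their indication for manifolds. The underlying heuristic is that if $\lambda_k(M,\mu)$ greatly exceeds $\lambda_1(M,\mu)$, then a first eigenfunction cannot concentrate its mass on a small set, and consequently some of its superlevel sets must realize a small Cheeger ratio.

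First I would fix a first eigenfunction $f=f_1$ normalized by $\int f\,d\mu=0$ and $\int f^2\,d\mu=1$, and (replacing $f$ by $-f$ if necessary) set $f_+:=\max(f,0)$ so that $\mu(\supp f_+)\le 1/2$. The goal is to exhibit a threshold $t$ for which $\{f_+>t\}$ has Cheeger ratio at most $8\sqrt{2}\,k\,\lambda_1/\sqrt{\lambda_k}$; since $h_1(M,\mu)$ is the infimum of such ratios, this suffices. The main construction is a layered decomposition of $f_+$ via a geometric threshold sequence $t_j:=\|f_+\|_\infty\,e^{-j}$: the layers $g_j:=(f_+\wedge t_{j-1}-t_j)_+$ are non-negative Lipschitz functions satisfying $f_+=\sum_j g_j$ and $\supp g_j\subset B_j:=\{f_+>t_j\}$ with $\mu(B_j)\le 1/2$.

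For each layer, the co-area formula and the definition of $h_1(M,\mu)$ give $\int|\nabla g_j|\,d\mu\ge h_1(M,\mu)\int g_j\,d\mu$. Combined with the layerwise Cauchy--Schwarz $\int|\nabla g_j|\,d\mu\le\sqrt{\mu(B_j)}\sqrt{\int|\nabla g_j|^2\,d\mu}$ and the gradient-disjointness identity $\sum_j\int|\nabla g_j|^2\,d\mu=\int|\nabla f_+|^2\,d\mu\le\lambda_1(M,\mu)$, a direct summation via Cauchy--Schwarz recovers only the classical Cheeger estimate. The improvement must come from $\lambda_k(M,\mu)$ via its min--max characterization
\[
\lambda_k(M,\mu)=\min_{\dim V=k+1}\max_{u\in V\setminus\{0\}}\frac{\int|\nabla u|^2\,d\mu}{\int u^2\,d\mu}.
\]
Supposing, for contradiction, that $h_1(M,\mu)>8\sqrt{2}\,k\,\lambda_1(M,\mu)/\sqrt{\lambda_k(M,\mu)}$, I would pigeonhole the geometric sequence $\{\mu(B_j)\}$ to select $k+1$ indices $j_1<\cdots<j_{k+1}$ whose layer functions $g_{j_1},\ldots,g_{j_{k+1}}$ span a $(k+1)$-dimensional subspace of $L^2(M,\mu)$, then bound the maximum Rayleigh quotient on this subspace using the layerwise Cauchy--Schwarz together with the assumed lower bound on every Cheeger ratio. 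The min--max principle would then contradict that lower bound, with the factor $e/(e-1)$ in Theorem~\ref{thm:Main} arising as a residue of the geometric threshold ratio $e$.

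The main obstacle is this last step: constructing the $(k+1)$-dimensional test space so that every element has Rayleigh quotient controlled dimension-freely in terms of the assumed Cheeger lower bound. Unlike the graph case, where sums of indicator functions are admissible test vectors, here the layer functions $g_{j_i}$ are themselves smooth and must be combined without mollification. The essential orthogonality of their gradients, supported on disjoint annuli $\{t_{j_i}<f_+\le t_{j_i-1}\}$, together with the layerwise Cauchy--Schwarz, is what allows the bound to survive without the dimension-dependent losses present in the sketch of \cite{KLLGT2013} for manifolds.
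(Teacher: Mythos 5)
Your proposal sketches the right general philosophy — approximate the (positive part of the) first eigenfunction by a layered decomposition, then use the min--max characterization of $\lambda_k$ to show that if $h_1$ were large, too many layers would have small Rayleigh quotient — but it stops short of the actual argument and, more importantly, chooses the layering in a way that does not deliver the needed control. You candidly flag this yourself as ``the main obstacle.''

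The difficulty with your geometric thresholds $t_j = \|f_+\|_\infty e^{-j}$ is that they give you no handle on the $L^2$ norms $\|g_j\|_{L^2(\mu)}$ of the individual layers. The whole mechanism of the improved Cheeger inequality, both in \cite{KLLGT2013} and in the paper, is to bound $\sum_i \mathcal{R}(f^i)$ by $k\lambda_k$ so that a pigeonhole selects $k+1$ disjointly supported functions each with Rayleigh quotient strictly less than $\lambda_k$; since their supports are disjoint, the maximum Rayleigh quotient over their span equals the maximum of the individual quotients, giving the contradiction with min--max. For $\sum_i \mathcal{R}(f^i)$ to be controlled by $\sum_i \int|\nabla f^i|^2 \le \int|\nabla f|^2$, you must know that every layer has the \emph{same}, fixed $L^2$ mass. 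With a pre-fixed geometric sequence this fails: some layers can have tiny $L^2$ norm and huge Rayleigh quotient, and your ``pigeonhole the sequence $\{\mu(B_j)\}$'' step has no mechanism to rule them out. The paper avoids this by choosing the thresholds $t_1,\dots,t_{2k}$ \emph{adaptively}: $t_i$ is the smallest value for which the layer on $\{t_{i-1}<f\le t_i\}$ has $L^2$ norm exactly $C_0 := \int|\nabla f|^2/(k\lambda_k)$ (Lemma~\ref{lemma3.3}). The other ingredient you are missing is the link between the layered decomposition and $\phi(f)$: the paper passes through the auxiliary function $h(x)=\int_0^{f(x)}|t-\psi(t)|\,dt$, a monotone reparametrization of $f$ (so $\phi(h)=\phi(f)$), to which the co-area inequality applies cleanly; your decomposition $f_+=\sum_j g_j$ paired with layerwise co-area does not by itself produce the factor $\|f-g_k\|_{L^2}$ that couples the two lemmas. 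Finally, note that the paper's Theorem~\ref{thm:ImproveCheeger} follows by applying the functional estimate to \emph{both} $f_0=\max(f,0)$ and $f_1=\max(-f,0)$ (Lemma~\ref{lem:smallerLGT}), because $h_1(M,\mu)$ as defined in \eqref{eq:MultiwayCheeger} requires two disjoint sets; working with $f_+$ alone is not quite enough under that definition.
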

Note that when $k=1$, (\ref{eq:ImprovedCheeger}) reduces to Cheeger inequality \cite{Cheeger1970} with a larger constant. The proof of the improved Cheeger inequality for graphs depends on the fact that all eigenvalues of the normalized graph Laplacian are bounded from above by $2$. This is fortunately not needed in the continuous space case. Comparing with the graph case $h_1\leq 10\sqrt{2}(k+1)\lambda_1/\sqrt{\lambda_{k}}$ in \cite{KLLGT2013}, the constant $8\sqrt{2}$ in (\ref{eq:ImprovedCheeger}) is smaller. This is mainly due to a special argument valid in the continuous space case (see the Claim in the proof of Lemma \ref{lemma3.3}). We also have a higher order version of the inequality (\ref{eq:ImprovedCheeger}), see Theorem \ref{thm:ImprovedHigher}.


The second ingredient in the proof of Theorem \ref{thm:Main} is the following dimension-free Buser inequality \cite{Buser82} due to Ledoux \cite{Ledoux04}.

\begin{thm}\label{thm:LedouxBuser}
On a closed weighted Riemannian manifold $(M,\mu)$ of nonnegative Bakry-\'{E}mery Ricci curvature, we have
\begin{equation}\label{eq:LedouxBuser}
h_1(M,\mu)\geq \frac{e-1}{\sqrt{2}e}\sqrt{\lambda_1(M,\mu)}.
\end{equation}
\end{thm}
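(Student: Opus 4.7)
The plan follows Ledoux's heat semigroup approach. Given $\epsilon > 0$, I would fix a Borel set $A \subset M$ with $\mu(A) \leq 1/2$ and $|\partial A|_\mu / \mu(A) \leq h_1(M,\mu) + \epsilon$, where $|\partial A|_\mu$ denotes the weighted perimeter. Writing $(P_t)_{t \geq 0}$ for the heat semigroup generated by $-\Delta_\mu$, the strategy is to bound the quantity $\mu(A) - \int_A P_t 1_A \, d\mu$ from below using the spectral gap and from above using the curvature assumption, then to optimize over $t$.

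For the spectral lower bound, set $f = 1_A - \mu(A)$, which is orthogonal to constants and satisfies $\|f\|_{L^2(\mu)}^2 = \mu(A)(1-\mu(A))$. Expanding $f$ in an orthonormal eigenbasis of $\Delta_\mu$ and using $\lambda_k(M,\mu) \geq \lambda_1(M,\mu)$ for $k \geq 1$ yields
\[
\int f P_t f \, d\mu \;\leq\; e^{-\lambda_1(M,\mu)\, t}\,\mu(A)(1-\mu(A)).
\]
Rewriting the left side as $\int_A P_t 1_A \, d\mu - \mu(A)^2$ and noting $1-\mu(A) \geq 1/2$ gives
\[
\mu(A) - \int_A P_t 1_A \, d\mu \;\geq\; \tfrac{1}{2}\,\mu(A)\bigl(1 - e^{-\lambda_1(M,\mu)\, t}\bigr).
\]

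For the geometric upper bound, I would use the self-adjointness of $P_{t/2}$ to write $\int_A P_t 1_A \, d\mu = \|P_{t/2} 1_A\|_{L^2(\mu)}^2$, and then differentiate in $t$ to obtain
\[
\mu(A) - \int_A P_t 1_A \, d\mu \;=\; \int_0^t \int_M |\nabla P_{s/2} 1_A|^2 \, d\mu \, ds.
\]
The curvature assumption enters here through the Bakry gradient commutation $|\nabla P_{s/2} u| \leq P_{s/2}(|\nabla u|)$, equivalent to the Bakry-\'Emery $\mathrm{CD}(0,\infty)$ condition. Combined with the $L^2$-contractivity of $P_{s/2}$ and a Cauchy-Schwarz step, this should yield the dimension-free estimate
\[
\mu(A) - \int_A P_t 1_A \, d\mu \;\leq\; \sqrt{t/2}\,|\partial A|_\mu.
\]
Since $1_A$ is not smooth, this step requires approximation by smooth functions together with the lower semicontinuity of total variation on $\mathrm{BV}(M)$.

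Combining the two bounds, dividing by $\mu(A)$, applying $|\partial A|_\mu/\mu(A) \leq h_1(M,\mu) + \epsilon$, and letting $\epsilon \downarrow 0$ gives
\[
h_1(M,\mu) \;\geq\; \frac{1 - e^{-\lambda_1(M,\mu)\, t}}{\sqrt{2t}} \qquad \text{for every } t > 0.
\]
Choosing $t = 1/\lambda_1(M,\mu)$ produces exactly the claimed constant, since $(1 - 1/e)/\sqrt{2/\lambda_1} = \frac{e-1}{\sqrt{2}e}\sqrt{\lambda_1(M,\mu)}$. The main obstacle is the geometric upper bound with the sharp constant $\sqrt{t/2}$: the spectral step is routine bookkeeping, but distilling the curvature hypothesis into a clean dimension-free $L^2$-estimate, and simultaneously handling the BV approximation of $1_A$, is the delicate part where all the analytic work is concentrated.
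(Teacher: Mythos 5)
Your overall architecture is exactly Ledoux's heat-semigroup argument, which is all the paper itself does (it simply cites Ledoux's Lemma~5.1 specialized to $K=0$). Your spectral lower bound, the identity $\mu(A)-\int_A P_t1_A\,d\mu=\tfrac12\|P_t1_A-1_A\|_1$, and the final optimization at $t=1/\lambda_1(M,\mu)$ are all correct and give precisely $\tfrac{e-1}{\sqrt2 e}$.

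The one place where I see a genuine gap is in how you propose to obtain the geometric estimate $\mu(A)-\int_A P_t1_A\,d\mu\le\sqrt{t/2}\,\mu^+(A)$. You write that it should follow from the commutation $|\nabla P_{s/2}u|\le P_{s/2}|\nabla u|$ together with ``$L^2$-contractivity and a Cauchy-Schwarz step'' applied to $\int_0^t\int_M|\nabla P_{s/2}1_A|^2\,d\mu\,ds$. If you run that route literally you get
\[
\int_M|\nabla P_{s/2}1_A|^2\,d\mu\ \le\ \|\nabla P_{s/2}1_A\|_\infty\int_M P_{s/2}|\nabla 1_A|\,d\mu\ \le\ \frac{1}{2\sqrt{s}}\,\mu^+(A),
\]
and integrating yields $\sqrt{t}\,\mu^+(A)$, a factor $\sqrt{2}$ worse than claimed; the resulting constant would be $\tfrac{e-1}{2e}$, not $\tfrac{e-1}{\sqrt2 e}$. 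The inequality you actually need is not the commutation estimate but the \emph{local (reverse) Poincar\'e} consequence of $CD(0,\infty)$, namely $2t|\nabla P_tf|^2\le P_t(f^2)-(P_tf)^2$, refined for $0\le f\le1$ to $|\nabla P_tf|\le\tfrac{1}{2\sqrt{2t}}$. Either one proves $\|P_tf-f\|_1\le\sqrt{2t}\,\|\nabla f\|_1$ by pairing against $\|g\|_\infty\le1$ and using $\|\nabla P_sg\|_\infty\le\tfrac{1}{\sqrt{2s}}$, or, equivalently, one integrates by parts to rewrite $\int_M|\nabla P_{s/2}1_A|^2\,d\mu=\int_M\langle\nabla 1_A,\nabla P_s1_A\rangle\,d\mu$ (so the semigroup sits at time $s$, not $s/2$) and applies $\|\nabla P_s1_A\|_\infty\le\tfrac{1}{2\sqrt{2s}}$. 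Both routes produce $\int_0^t\frac{ds}{2\sqrt{2s}}=\sqrt{t/2}$. So the missing idea is the reverse Poincar\'e estimate plus the time-doubling step; the commutation $|\nabla P_tu|\le P_t|\nabla u|$ alone, in the way you deploy it, does not close at the sharp constant.
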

We comment that the constant here is larger than $1/6$ stated in \cite{Ledoux04}(see the formula (5.8) there).
The constant $(e-1)/\sqrt{2}e$ is obtained by restricting the proof of Lemma 5.1 in \cite{Ledoux04} to the particular case that Ricci curvature is nonnegative instead of bounded from below by a general number $-K$, $K\geq 0$.

Combining Theorems \ref{thm:ImproveCheeger} and \ref{thm:LedouxBuser} leads straightforwardly to a proof of Theorem \ref{thm:Main}.


The geometry and analysis of metric measure spaces, including discrete ones, have been extensively studied in recent years.
This topic benefited significantly from analogous ideas and results developed on Riemannian manifolds.
 Conversely, we see from the proof of Theorem \ref{thm:Main} that a deeper understanding of discrete spaces can also provide new insights and results on Riemannian manifolds. Moreover, there was recent progress in the spectral theory of Markov operators on general probability spaces based on results for discrete spaces: Miclo \cite{Miclo2013} confirmed a conjecture of Simon and H{\o}egh-Krohn based on the work of Lee, Oveis Gahran and Trevisan \cite{LGT2013} on finite graphs. This direction was further developed in \cite{Wang2014,Liu13}.

\subsection{Applications}
Theorem \ref{thm:Main} has important applications. We use it to improve the higher-order Buser-Ledoux inequality and the higher-order Gromov-Milman inequality established by Funano \cite{Funano2013} (see Theorems \ref{thm:higerBuserLedoux} and \ref{thm:higerGromovMilman} below). The latter inequality relates eigenvalues with the observable diameter introduced by Gromov \cite{Gromov99}. Actually, the higher-order Gromov-Milman inequality ((\ref{eq:GromovMilman}) below) obtained here can imply Cheng's dimension-dependent diameter estimates \cite{Cheng75} up to a universal constant.

Theorem \ref{thm:Main} also implies that the ratio of the $k$-th to the first isoperimetric constant (see Definition \ref{def:MultiwayCheeger}) can be bounded by a polynomial function of $k$, answering part of Question 6.3 in \cite{Funano2013}.


\section{Preliminaries}
Let $(M, \mu)$ be a closed weighted Riemannian manifold. The weighted Laplacian $\Delta_{\mu}$ is given by
\begin{equation*}
\Delta_{\mu}:=\Delta-\nabla V\cdot \nabla,
\end{equation*}
where $\Delta$ is the Laplace-Beltrami operator on $M$. For its basic spectral theory, we refer to Section 2 of \cite{Setti98}.
For any functions $f\in W^{1,2}(\mu)$, its Rayleigh quotient $\mathcal{R}(f)$ is defined as
\begin{equation*}
\mathcal{R}(f):=\frac{\int_M|\nabla f(x)|^2d\mu(x)}{\int_Mf(x)^2d\mu(x)}.
\end{equation*}
In the following, we only need to deal with Lipschitz functions as they are dense in $W^{1,2}(\mu)$.

The Bakry-\'{E}mery Ricci curvature tensor (see e.g. \cite{Bak}) is defined as
\begin{equation*}
 \mathrm{Ric}_{\mu}:=\mathrm{Ric}+\mathrm{Hess} V,
\end{equation*}
where $\mathrm{Ric}$ is the usual Ricci curvature tensor on $M$.

For any Borel subset $A\subseteq M$, its boundary measure $\mu^+(A)$ is defined as
\begin{equation*}
\mu^+(A):=\liminf_{r\rightarrow 0}\frac{\mu(O_r(A))-\mu(A)}{r},
\end{equation*}
where $O_r(A):=\{x\in M: d(x,a)<r \text{ for some } a\in A\}$ is the open $r$-neighborhood of $A$.
For any $A$ with $\mu(A)>0$, we define
\begin{equation*}
\phi(A):=\frac{\mu^+(A)}{\mu(A)}.
\end{equation*}
Given a nonnegative Lipschitz function $f: M\rightarrow \mathbb{R}_{\geq 0}$, we define
$M_f(t):=\{x\in M: f(x)> t\}$ and $\phi(f):=\inf_{t\in\mathbb{R}_{\geq 0}}\phi(M_f(t))$.
\begin{definition}[Multi-way isoperimetric constants \cite{Miclo2008,Funano2013}]\label{def:MultiwayCheeger}
 For a natural number $k$, the $k$-th isoperimetric constant is defined as
\begin{equation}\label{eq:MultiwayCheeger}
 h_k(M,\mu):=\inf_{A_0,A_1,\ldots,A_k}\max_{0\leq i\leq k}\phi(A_i),
\end{equation}
where the infimum is taken over all collections of $k+1$ non-empty, disjoint Borel subsets $A_0,A_1,\ldots,A_k$ of $M$ such that for each $0\leq i\leq k$, $\mu(A_i)>0$.
\end{definition}
Note that $h_k(M,\mu)\leq h_{k+1}(M, \mu)$ and that $h_1(M,\mu)$ is the classical Cheeger constant.
\begin{lemma}\label{lem:smallerLGT}
There exist two nonnegative disjointly supported Lipschitz functions $f_0$ and $f_1$ such that
\begin{equation}
\mathcal{R}(f_0)=\mathcal{R}(f_1)=\lambda_1(M, \mu).
\end{equation}
\end{lemma}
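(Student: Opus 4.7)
The plan is to take $f_0$ and $f_1$ to be the positive and negative parts of a first eigenfunction of $\Delta_\mu$. Let $\varphi$ be a (smooth) eigenfunction associated to $\lambda_1=\lambda_1(M,\mu)$. Since the eigenspace of $\lambda_0=0$ consists of the constants and $\varphi$ is orthogonal to this eigenspace, $\int_M \varphi\, d\mu=0$, so $\varphi$ must attain both positive and negative values. I would set
\begin{equation*}
f_0:=\varphi^+=\max(\varphi,0),\qquad f_1:=\varphi^-=\max(-\varphi,0).
\end{equation*}
Both are nonnegative and Lipschitz (as compositions of the smooth $\varphi$ with the Lipschitz map $t\mapsto t^\pm$ on the compact manifold $M$), their supports $\overline{\{\varphi>0\}}$ and $\overline{\{\varphi<0\}}$ meet only on the nodal set $\{\varphi=0\}$, and each $f_i$ is nontrivial because $\varphi$ changes sign.

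Next I would verify that $\mathcal{R}(f_0)=\mathcal{R}(f_1)=\lambda_1$. The key analytical input is the pointwise a.e.\ chain rule for the positive part of a Lipschitz function: $\nabla\varphi^+=\chi_{\{\varphi>0\}}\nabla\varphi$, so that
\begin{equation*}
\nabla\varphi^+\cdot\nabla\varphi=|\nabla\varphi^+|^2\quad\text{and}\quad \varphi^+\varphi=(\varphi^+)^2\quad\text{a.e.}
\end{equation*}
Since $\varphi^+\in W^{1,2}(\mu)$ is an admissible test function, multiplying the eigenvalue equation $\Delta_\mu\varphi=-\lambda_1\varphi$ by $\varphi^+$ and integrating by parts against the weighted measure $e^{-V}d\mathrm{vol}_M$ gives
\begin{equation*}
\int_M|\nabla\varphi^+|^2\, d\mu=\int_M\nabla\varphi^+\cdot\nabla\varphi\, d\mu=\lambda_1\int_M(\varphi^+)^2\, d\mu,
\end{equation*}
i.e.\ $\mathcal{R}(f_0)=\lambda_1$. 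The identical computation with $\varphi^-$ in place of $\varphi^+$, using $\nabla\varphi^-=-\chi_{\{\varphi<0\}}\nabla\varphi$ and $\varphi^-\varphi=-(\varphi^-)^2$, yields $\mathcal{R}(f_1)=\lambda_1$.

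The only subtlety is justifying the integration by parts with the merely Lipschitz test function $\varphi^\pm$; this is routine once one knows the truncation-type chain rule for Sobolev functions on $(M,\mu)$, and can be handled either by approximating $\varphi^+$ by $\psi_\varepsilon(\varphi)$ for smooth cut-offs $\psi_\varepsilon$ of the positive part and passing to the limit via dominated convergence, or by invoking the standard fact that $W^{1,2}(\mu)$ is closed under composition with Lipschitz functions. No further ingredients beyond the basic elliptic regularity of $\varphi$ and the sign-of-gradient identity above are needed, so I do not anticipate a genuine obstacle.
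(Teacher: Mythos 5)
Your proof is correct and follows essentially the same approach as the paper: take the positive and negative parts of a first eigenfunction, observe they are nontrivial because $\int_M \varphi\,d\mu=0$, and use the weak formulation of the eigenvalue equation together with the a.e.\ identities $\nabla\varphi^\pm\cdot\nabla\varphi=|\nabla\varphi^\pm|^2$ and $\varphi^\pm\varphi=\pm(\varphi^\pm)^2$ to conclude $\mathcal{R}(\varphi^\pm)=\lambda_1$. The paper's argument is exactly this chain of equalities, stated more tersely; you merely add the (routine) justification of testing with a Lipschitz function.
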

\begin{proof}
Let $f$ be the eigenfunction corresponding to $\lambda_1(M, \mu)$, and
\begin{equation*}
f_0(x):=\max\{f(x), 0\}, \,\, f_1(x):=\max\{-f(x), 0\}.
\end{equation*}
Since $\int_Mf(x)d\mu(x)=0$, we know that both $f_0$ and $f_1$ are non-trivial. By definition, we have,
\begin{align*}
&\int_M\langle\nabla f_i, \nabla f\rangle d\mu=\lambda_1(M, \mu)\int_Mf_ifd\mu=\lambda_1(M, \mu)\int_Mf_i^2d\mu\\
=&\int_M\langle\nabla f_i, \nabla f_i\rangle d\mu, \,\,\,\,\text{ for } i=0,1.
\end{align*}
This shows that $\mathcal{R}(f_i)=\lambda_1(M, \mu)$.
\end{proof}
A strong extension of this simple fact is the following lemma. It was first proved in the setting of finite graphs by Lee, Oveis Gharan and Trevisan \cite{LGT2013} and extended to Riemannian manifolds in \cite{Funano2013,Miclo2013}.
\begin{lemma}\label{lem:LGT}
For any natural number $k$, there exist $k+1$ nonnegative disjointly supported Lipschitz functions $f_0, f_1, \ldots, f_k$ and a numeric constant $C$ such that
for each $0\leq i\leq k$,
\begin{equation}
\mathcal{R}(f_i)\leq Ck^6\lambda_k(M,\mu).
\end{equation}
\end{lemma}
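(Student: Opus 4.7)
The plan is to adapt the argument of Lee, Oveis Gharan and Trevisan (LGT) for finite graphs to the closed weighted Riemannian setting, following \cite{Funano2013,Miclo2013}. The starting point is the spectral embedding $F:M\to\R^k$ defined by $F(x):=(\phi_1(x),\ldots,\phi_k(x))$, where $\phi_1,\ldots,\phi_k$ are the first $k$ eigenfunctions of $\Delta_\mu$ orthonormal in $L^2(\mu)$. Every linear combination $g=\sum_i c_i\phi_i$ satisfies $\mathcal{R}(g)\leq\lambda_k(M,\mu)$, and the global energy bound
\begin{equation*}
\int_M\|dF\|_{HS}^2\,d\mu=\sum_{i=1}^k\lambda_i(M,\mu)\leq k\lambda_k(M,\mu)
\end{equation*}
will be used repeatedly. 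Since $M$ is compact and the $\phi_i$ are smooth, $F$ is globally Lipschitz.

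From $F$ I would build a partition of $M$ into $k+1$ nontrivial disjoint Borel sets $A_0,\ldots,A_k$. The LGT recipe is to select $O(k)$ well-spread centers in the spectral image $F(M)\subset\R^k$, form the associated Voronoi-type cells (or a random padded decomposition), pull them back to $M$, and then pigeonhole to retain $k+1$ cells on which both the $\mu$-mass and the pairwise separation in the pulled-back pseudometric $d_F(x,y):=\|F(x)-F(y)\|$ are simultaneously controlled. The outcome should be a scale $\delta>0$ such that $\mu(A_i)\gtrsim 1/k$ for each $i$ and $d_F(A_i,A_j)\geq\delta$ for $i\neq j$.

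Given such a partition, I would define the candidate test functions as $f_i=\psi_i\circ F$, where $\psi_i:\R^k\to[0,1]$ is a $(1/\delta)$-Lipschitz cutoff equal to $1$ on a neighborhood of $F(A_i)$ and vanishing outside its $\delta$-thickening. The $\delta$-separation ensures disjoint supports, and the chain rule gives $|\nabla f_i|^2\leq\delta^{-2}\|dF\|_{HS}^2$ pointwise, whence
\begin{equation*}
\int_M|\nabla f_i|^2\,d\mu\leq\frac{k\lambda_k(M,\mu)}{\delta^2},\qquad\int_M f_i^2\,d\mu\geq\mu(A_i),
\end{equation*}
yielding $\mathcal{R}(f_i)\leq k\lambda_k(M,\mu)/(\delta^2\mu(A_i))$.

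The hard part is the quantitative partitioning step: one must arrange the selection of centers and scale so that, for all $k+1$ retained cells simultaneously, $\delta^2\mu(A_i)\gtrsim k^{-5}$. This three-way trade-off between the number of cells, the mass in each cell, and the mutual separation in $d_F$ is precisely the origin of the $k^6$ factor in the conclusion. Executing this balance uniformly over the cells is where the LGT combinatorial/probabilistic scheme must be transplanted to the continuous setting, and where the bulk of the technical work in \cite{Funano2013,Miclo2013} is concentrated.
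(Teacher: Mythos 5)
The paper itself provides no proof of this lemma; it cites it as established in \cite{LGT2013,Funano2013,Miclo2013}, and your sketch correctly reconstructs the Lee--Oveis Gharan--Trevisan route that those references transplant to the continuous setting: spectral embedding, a well-spread partition with simultaneous lower bounds on $\mu(A_i)$ and on the pairwise $d_F$-separation $\delta$, cutoff functions built from that partition, and the $\delta^{2}\mu(A_i)\gtrsim k^{-5}$ bookkeeping that yields the $k^{6}$ factor. Since you defer the quantitative partitioning lemma to the same references the paper cites, your outline is consistent with the intended argument and there is no gap to flag.
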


The following lemma is a direct corollary of the co-area inequality (Lemma 3.2 in Bobkov and Houdr\'{e} \cite{BH97}, see also \cite{Funano2013}).
\begin{lemma}\label{lem:cheeger-coarea}
For any nonnegative Lipschitz function $f$, we have
\begin{equation}
\phi(f)\leq \frac{\int_M|\nabla f(x)|d\mu(x)}{\int_Mf(x)d\mu(x)}.
\end{equation}
\end{lemma}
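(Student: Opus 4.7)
The plan is to reduce the statement to a pointwise inequality at each level set by combining the two classical integral identities: the co-area inequality of Bobkov--Houdré and the layer-cake representation. Since the lemma is advertised as a direct corollary, the proof should be short, and the main work is just book-keeping with the definition of $\phi(f)$.

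First, I would apply the co-area inequality from \cite{BH97} to the nonnegative Lipschitz function $f$, which yields
\begin{equation*}
\int_M|\nabla f(x)|\,d\mu(x)\ \geq\ \int_0^{\infty}\mu^+(M_f(t))\,dt.
\end{equation*}
Next, using Fubini's theorem (layer cake) together with nonnegativity of $f$, I would rewrite the denominator as
\begin{equation*}
\int_Mf(x)\,d\mu(x)\ =\ \int_0^{\infty}\mu(M_f(t))\,dt.
\end{equation*}
At this point the two integrals are over the same variable $t$, so the problem is converted to a level-set comparison.

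The key pointwise step is then immediate from the definition of $\phi(f)$: for almost every $t\geq 0$ with $\mu(M_f(t))>0$, the super-level set $M_f(t)$ is admissible in the infimum defining $\phi(f)$, so
\begin{equation*}
\mu^+(M_f(t))\ =\ \phi(M_f(t))\,\mu(M_f(t))\ \geq\ \phi(f)\,\mu(M_f(t)),
\end{equation*}
and the inequality is trivial at $t$ with $\mu(M_f(t))=0$. Integrating this in $t$ and chaining with the two displays above gives
\begin{equation*}
\int_M|\nabla f|\,d\mu\ \geq\ \phi(f)\int_0^{\infty}\mu(M_f(t))\,dt\ =\ \phi(f)\int_Mf\,d\mu,
\end{equation*}
and the conclusion follows after dividing (the case $\int_M f\,d\mu=0$ being trivial since then $f\equiv 0$ and there is nothing to prove).

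I do not anticipate a real obstacle here; the only mild point to handle carefully is the exceptional set of $t$ where $\mu(M_f(t))$ vanishes, which contributes nothing because $\mu^+$ is also nonnegative there, so the pointwise bound extends to all $t\geq 0$ without modification.
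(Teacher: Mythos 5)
Your argument is essentially identical to the paper's: both rest on the co-area inequality of Bobkov--Houdr\'{e}, the layer-cake identity $\int_M f\,d\mu = \int_0^\infty \mu(M_f(t))\,dt$, and the observation that $\phi(f)$ bounds the pointwise ratio $\mu^+(M_f(t))/\mu(M_f(t))$ from below. You merely spell out the last step (the pointwise bound and the treatment of the null level sets) that the paper's proof leaves implicit when it writes $\phi(f)\leq \int_0^\infty\mu^+(M_f(t))\,dt\big/\int_0^\infty\mu(M_f(t))\,dt$.
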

\begin{proof}
It is straightforward to observe
\begin{equation}\label{eq:Lebesgue}
\int_M f(x)d\mu(x)=\int_0^{\infty}\mu(M_f(t))dt.
\end{equation}
By the co-area inequality, we have
\begin{equation}\label{eq:coarea}
\int_0^{\infty}\mu^+(M_f(t))dt\leq \int_M|\nabla f(x)|d\mu(x).
\end{equation}
Combining (\ref{eq:Lebesgue}) and (\ref{eq:coarea}), we obtain
\begin{equation*}
\phi(f)\leq\frac{\int_0^{\infty}\mu^+(M_f(t))dt}{\int_0^{\infty}\mu(M_f(t))dt}\leq\frac{\int_M|\nabla f(x)|d\mu(x)}{\int_Mf(x)d\mu(x)}.
\end{equation*}
\end{proof}

\section{Proof of Theorem \ref{thm:ImproveCheeger}}\label{Section:proof}
In this section, we present an extension of the methods of Kwok et al. \cite{KLLGT2013} developed for finite graphs to prove Theorem \ref{thm:ImproveCheeger}. Particular efforts are made to improve the constant involved. Actually, we prove the following theorem.
\begin{thm}\label{thm:mainfunctionalversion}
For any nonnegative Lipschitz function $f: M\rightarrow \mathbb{R}$, we have
\begin{equation}
\phi(f)\leq 8\sqrt{2}k\frac{\mathcal{R}(f)}{\sqrt{\lambda_k(M, \mu)}}.
\end{equation}
\end{thm}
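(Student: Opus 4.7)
The plan is to extend the improved Cheeger inequality of Kwok, Lau, Lee, Oveis Gharan and Trevisan \cite{KLLGT2013} from finite graphs to the weighted Riemannian setting, exploiting continuous-space structure to sharpen the constant. Setting $R := \mathcal{R}(f)$ and $\alpha := \phi(f)$, the target inequality is equivalent to the bound $\lambda_k(M,\mu) \leq 128\, k^2 R^2/\alpha^2$. My strategy is to produce $k+1$ disjointly supported Lipschitz test functions, each with Rayleigh quotient at most $128\, k^2 R^2/\alpha^2$; the min-max variational characterization of $\lambda_k$ through disjointly supported test functions then closes the argument.

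To produce such functions I would first select a sequence of thresholds $0 = t_0 < t_1 < \cdots < t_N$ of length $N$ proportional to $k$, chosen by a greedy procedure so that the super-level-set measures $\mu(M_f(t_i))$ decrease in a controlled (e.g.\ geometric) fashion between consecutive thresholds. These thresholds partition the range of $f$ into bands $B_i := \{t_i < f < t_{i+1}\}$, and on each band I would place a bilinear ``tent'' test function $g_i(x) := (f(x) - t_i)_+ \wedge (t_{i+1} - f(x))_+$. These $g_i$ are Lipschitz, manifestly disjointly supported on the $B_i$, and enjoy the pointwise gradient identity $|\nabla g_i|^2 = |\nabla f|^2 \mathbf{1}_{B_i}$, so $\sum_i \int_M |\nabla g_i|^2\, d\mu \leq \int_M |\nabla f|^2\, d\mu = R\int_M f^2\, d\mu$. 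The denominator $\int_M g_i^2\, d\mu$ would be lower bounded by combining the hypothesis $\phi(M_f(t)) \geq \alpha$ with the co-area inequality underlying Lemma~\ref{lem:cheeger-coarea}, which translates the isoperimetric assumption on level-set perimeters into an $L^2$-mass lower bound for $g_i$ in terms of the band measure and the threshold gap $t_{i+1} - t_i$.

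The main obstacle I expect is the delicate balancing between the number of bands $N$, the threshold spacing, and the numeric constants. Choosing $N$ too small makes the min-max step weak (too few test functions available for $\lambda_k$); choosing $N$ too large makes at least one band's $L^2$-mass too small to yield the desired Rayleigh bound. Separately, the smoothness of the tent construction and the ability to use the sharp co-area identity (rather than its weaker combinatorial analogue on graphs) are presumably what power the improvement from the KLLGT constant $10\sqrt{2}(k+1)/k$ down to $8\sqrt{2}k$; I would expect this continuous-space gain to be the content of the Claim that the author flags in the proof of Lemma~\ref{lemma3.3}.
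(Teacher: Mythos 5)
Your tent functions $g_i(x)=(f(x)-t_i)_+\wedge(t_{i+1}-f(x))_+$ coincide (up to indexing) with the functions $f^i$ the paper uses in Lemma~\ref{lemma3.3}, and the min-max principle with disjointly supported test functions is indeed invoked there. But the overall architecture of your argument is structurally different from the paper's, and the place you identify as ``the main obstacle'' is exactly where the plan breaks.

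The paper proves the theorem in two stages. Stage one (the unnamed lemma giving~\eqref{eq:cheegerf-g}) shows that for \emph{any} choice of thresholds $0=t_0\le\cdots\le t_{2k}=T$ one has $\phi(f)\le 8k\sqrt{\mathcal{R}(f)}\,\|f-g_k\|_{L^2}/\|f\|_{L^2}$, where $g_k=\psi_{t_0,\ldots,t_{2k}}\circ f$ is the nearest-threshold step approximation. The proof does \emph{not} apply the co-area inequality on the bands; it applies Lemma~\ref{lem:cheeger-coarea} to the auxiliary primitive $h(x)=\int_0^{f(x)}\eta(t)\,dt$ with $\eta(t)=|t-\psi(t)|$, using the crucial observations that $\phi(h)=\phi(f)$ (since $h$ is a monotone function of $f$) and $h(x)\ge f(x)^2/(8k)$. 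This is the only place $\phi(f)$ enters the argument. Stage two (Lemma~\ref{lemma3.3}) then constructs, \emph{independently of} $\phi(f)$, a specific step approximation with $\|f-g_k\|_{L^2}^2\le 2\,\lambda_k^{-1}\int|\nabla f|^2\,d\mu$. The greedy rule there is not ``geometric decay of super-level-set measure'' but rather: advance the threshold until the band accumulates an $L^2$-mass of $|f-\psi|$ exactly equal to the budget $C_0:=\int|\nabla f|^2d\mu/(k\lambda_k)$. If the procedure fails to reach $T$ in $2k$ steps, the $2k$ tent functions all have $\|f^i\|^2=C_0$, so $\sum_i\mathcal{R}(f^i)\le k\lambda_k$; the Claim extracts $k+1$ of them with $\mathcal{R}(f^i)<\lambda_k$, and min-max yields $\lambda_k<\lambda_k$, a contradiction. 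The budget $C_0$ is defined in terms of $\lambda_k$, not in terms of $\phi(f)$, and no lower bound on $\int g_i^2$ from isoperimetry is needed anywhere.

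Your plan to lower-bound $\int_M g_i^2\,d\mu$ ``by combining $\phi(M_f(t))\ge\alpha$ with the co-area inequality'' is the genuine gap. The $L^2$-mass a tent accumulates on a band depends on the distribution of the values of $f$ inside that band, not merely on $\mu(B_i)$ and $t_{i+1}-t_i$: if $f$ transitions quickly across $[t_i,t_{i+1}]$ on a thin shell, $\int g_i^2$ can be arbitrarily small relative to those quantities, and the co-area inequality (which bounds $\int_0^\infty\mu^+(M_f(t))\,dt$ from above by $\int|\nabla f|$) points in the wrong direction for producing a mass lower bound. A threshold rule based on decay of $\mu(M_f(t))$ therefore does not control $\mathcal{R}(g_i)$, and so the min-max step cannot be closed directly. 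The paper sidesteps this entirely by making the $L^2$-mass itself the greedy criterion and arguing by contradiction. Finally, you misattribute the source of the improved constant: the continuous-space gain flagged in the Claim is not about tent smoothness or co-area sharpness, but about the fact that a nonzero Lipschitz function cannot have vanishing Rayleigh quotient, which lets one extract $k+1$ (rather than $k$) low-Rayleigh-quotient tests from only $2k$ bands.
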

This theorem is an immediate consequence of the two lemmata below.

Given real values $t_0, t_1, \ldots, t_l\in \mathbb{R}$, we define a function $\psi_{t_0,t_1,\ldots, t_l}: \mathbb{R}\rightarrow \mathbb{R}$ as follows: for any $x\in \mathbb{R}$, let
\begin{equation*}
\psi_{t_0,t_1,\ldots, t_l}(x):=\arg\min_{t_i\in\{t_0,t_1,\ldots, t_l\}}|x-t_i|.
\end{equation*}
That is, $\psi_{t_0,t_1,\ldots, t_l}(x)\in \{t_0,t_1,\ldots, t_l\}$ is the closest value $t_i$ to $x$.

Given a nonnegative Lipschitz function $f$, and a sequence of real values $0=t_0\leq t_1\leq \cdots\leq t_{2k}=T:=\max_{x\in M}f(x)$, we have a $(2k+1)$-step function $g_k$ defined as
\begin{equation}
g_k(x):=\psi_{t_0,t_1,\ldots,t_{2k}}(f(x)).
\end{equation}
These step function approximations of $f$ share the following property.
\begin{lemma}
For any nonnegative Lipschitz function $f$, and a sequence of real values $0=t_0\leq t_1\leq \cdots\leq t_{2k}=T$, we have
\begin{equation}\label{eq:cheegerf-g}
\phi(f)\leq 8k\sqrt{\mathcal{R}(f)}\frac{\Vert f-g_k\Vert_{L^2(\mu)}}{\Vert f\Vert_{L^2(\mu)}}.
\end{equation}
\end{lemma}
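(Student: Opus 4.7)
The plan is to run a weighted version of the co-area argument of Lemma~\ref{lem:cheeger-coarea}. For any nonnegative measurable weight $w\colon [0,T]\to[0,\infty)$ with antiderivative $W(u):=\int_0^u w(t)\,dt$, I would multiply the isoperimetric inequality $\mu^+(M_f(t))\geq\phi(f)\,\mu(M_f(t))$, which holds for every $t$ by the definition of $\phi(f)$, by $w(t)$ and integrate against $dt$ on $[0,T]$. Fubini on the left and the co-area identity on the right, followed by Cauchy--Schwarz, give
\begin{equation*}
\phi(f)\int_M W(f)\,d\mu \;\leq\; \int_M w(f)\,|\nabla f|\,d\mu \;\leq\; \|w(f)\|_{L^2(\mu)}\,\|\nabla f\|_{L^2(\mu)}.
\end{equation*}

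The natural choice of weight is $w(t):=2|t-\psi_{t_0,\ldots,t_{2k}}(t)|$, since then $w(f(x))=2|f(x)-g_k(x)|$ and hence $\|w(f)\|_{L^2(\mu)}=2\|f-g_k\|_{L^2(\mu)}$. With this choice the lemma reduces to the pointwise estimate $W(u)\geq u^2/(4k)$ for all $u\in[0,T]$: integrating against $d\mu$ yields $\int_M W(f)\,d\mu\geq\|f\|_{L^2(\mu)}^2/(4k)$, and substituting back into the weighted co-area inequality produces $\phi(f)\|f\|_{L^2(\mu)}^2\leq 8k\,\|f-g_k\|_{L^2(\mu)}\,\|\nabla f\|_{L^2(\mu)}$, which is the claim after using $\|\nabla f\|_{L^2(\mu)}=\|f\|_{L^2(\mu)}\sqrt{\mathcal{R}(f)}$.

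The pointwise bound on $W$ is where the work lies. A direct computation gives $W(t_j)=\tfrac12\sum_{i=0}^{j-1}(t_{i+1}-t_i)^2$, and since this sum has $j\leq 2k$ terms whose linear sum equals $t_j$, Cauchy--Schwarz yields $W(t_j)\geq t_j^2/(4k)$ at the anchor points. For $u$ in the interior of a slab $[t_j,t_{j+1}]$ I would split at the midpoint $s_j:=\tfrac12(t_j+t_{j+1})$: on $[t_j,s_j]$ one has $W(u)=W(t_j)+(u-t_j)^2$, and on $[s_j,t_{j+1}]$ one has $W(u)=W(t_{j+1})-(t_{j+1}-u)^2$. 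Setting $F(u):=W(u)-u^2/(4k)$, one checks $F''>0$ on $[t_j,s_j]$ and $F''<0$ on $[s_j,t_{j+1}]$, so verifying $F\geq 0$ reduces to checking the endpoint values $F(t_j),F(t_{j+1})\geq 0$, the midpoint $F(s_j)\geq 0$, and (when the interior critical point lies in $[t_j,s_j]$) that single critical value. The main obstacle is that the constant $8k$ is tight: all these Cauchy--Schwarz checks must be carried out with essentially no slack. In particular, the midpoint case requires Cauchy--Schwarz applied to the augmented partition $(\delta_0,\ldots,\delta_{j-1},\delta_j/2)$ of $s_j$, and the interior critical point check relies on $j\leq 2k-1$ when $u$ lies in the slab's interior.
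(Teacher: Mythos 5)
Your argument is correct and, modulo presentation, follows the paper's proof. The weighted co-area step (with weight $w(t)=2|t-\psi_{t_0,\ldots,t_{2k}}(t)|$, Fubini, and Cauchy--Schwarz) is exactly what the paper does after introducing the auxiliary function $h(x):=\int_0^{f(x)}\eta(t)\,dt$ with $\eta=\tfrac12 w$, observing $\phi(h)=\phi(f)$, and applying Lemma~\ref{lem:cheeger-coarea} to $h$; your $W(f(x))$ is $2h(x)$, and the displayed inequalities match. The only genuine divergence is in how the pointwise bound $W(u)\geq u^2/(4k)$ (equivalently $h\geq f^2/(8k)$) is established. You split each slab $[t_j,t_{j+1}]$ at its midpoint, exploit convexity of $F:=W-u^2/(4k)$ on the left half and concavity on the right, and check $F$ at anchors, midpoints, and the interior critical point, which requires the margin coming from $j\leq 2k-1$; I have verified that all these checks do go through. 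The paper avoids this case analysis entirely: it proves the uniform lower bound
\begin{equation*}
\int_{t_i}^{u}\eta(t)\,dt\geq\frac{(u-t_i)^2}{4}\qquad\text{for every }u\in(t_i,t_{i+1}],
\end{equation*}
so that $h(x)\geq\tfrac14\bigl(\sum_{j<i}(t_{j+1}-t_j)^2+(f(x)-t_i)^2\bigr)$, and one application of Cauchy--Schwarz with $i+1\leq 2k$ terms gives $h\geq f^2/(8k)$ in one stroke. Both routes are valid; the paper's is shorter and avoids both the midpoint splitting and the tightness concerns you raise.
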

\begin{proof}
We define a function $\eta :\mathbb{R}\rightarrow \mathbb{R}$ as follows:
\begin{equation*}
\eta(x):=|x-\psi_{t_0,t_1,\ldots,t_{2k}}(x)|, \,\,\forall x\in \mathbb{R}.
\end{equation*}
The function $h: M\rightarrow \mathbb{R}$ is then defined as
\begin{equation*}
h(x):=\int_0^{f(x)}\eta(t)dt,\,\,\forall x\in M.
\end{equation*}
Observe that for any two points $x,y\in M$, $h(x)\geq h(y)$ if and only if $f(x)\geq f(y)$. Hence we obtain $\phi(h)=\phi(f)$.
By Lemma \ref{lem:cheeger-coarea}, we have
\begin{equation}
\phi(h)\leq \frac{\int_M|\nabla h(x)|d\mu(x)}{\int_Mh(x)d\mu(x)}.
\end{equation}
We estimate the numerator as
\begin{align}
&\int_M|\nabla h(x)|d\mu(x)=\int_M\left|\nabla\left(\int_0^{f(x)}\eta(t)dt\right)\right|d\mu(x)\notag\\
=&\int_M|\nabla f(x)||\eta(f(x))|d\mu(x)
\leq\Vert |\nabla f| \Vert_{L^2(\mu)}\Vert f-g_k \Vert_{L^2(\mu)}.\label{eq:numerator}
\end{align}
Observe that, for any $x\in M$, we can find an integer $i$ such that $t_i<f(x)\leq t_{i+1}$, Hence, we have
\begin{align}
h(x)&=\sum_{j=0}^{i-1}\int_{t_j}^{t_{j+1}}\eta(t)dt+\int_{t_i}^{f(x)}\eta(t)dt\notag\\
&\geq \sum_{j=0}^{i-1}\frac{(t_{j+1}-t_j)^2}{4}+\frac{(f(x)-t_i)^2}{4}.\label{eq:hlower}
\end{align}
Using the Cauchy-Schwarz inequality, we obtain
\begin{align}
f^2(x)&=\left(\sum_{j=0}^{i-1}(t_{j+1}-t_j)+(f(x)-t_i)\right)^2\notag\\
&\leq 2k\sum_{j=0}^{i-1}(t_{j+1}-t_j)^2+2k(f(x)-t_i)^2.\label{eq:fupper}
\end{align}
Combining (\ref{eq:hlower}) and (\ref{eq:fupper}), we conclude that
\begin{equation}\label{eq:denominator}
h(x)\geq \frac{1}{8k}f^2(x), \,\,\forall x\in M.
\end{equation}
Now (\ref{eq:numerator}) and (\ref{eq:denominator}) together imply (\ref{eq:cheegerf-g}).
\end{proof}

In fact, one can find the step function approximation of $f$ with a very controlled behavior.
\begin{lemma}\label{lemma3.3}
For any nonnegative Lipschitz function $f: M\rightarrow \mathbb{R}$, there exists a $(2k+1)$-step approximation $g_k$ such that
\begin{equation}\label{eq:approximation}
\Vert f-g_k \Vert_{L^2(\mu)}^2\leq \frac{2}{\lambda_k(M,\mu)}\int_M|\nabla f(x)|^2d\mu(x).
\end{equation}
\end{lemma}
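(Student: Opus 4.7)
The plan is to manufacture, from the thresholds parameterising $g_k$, a family of $2k$ pairwise disjointly supported Lipschitz test functions whose $L^2$-energies reconstruct $\|f-g_k\|_{L^2(\mu)}^2$ and whose Dirichlet energies are controlled by $\int_M|\nabla f|^2\,d\mu$. The Courant--Fischer min-max principle then forces at least $k$ of the $2k$ test functions to have Rayleigh quotient $\geq\lambda_k(M,\mu)$, and a balancing argument, valid in the continuous setting, converts ``at least $k$ out of $2k$'' into the constant $2$ appearing in (\ref{eq:approximation}).

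Concretely, for thresholds $0=t_0\leq t_1\leq\cdots\leq t_{2k}=T$, define
$$\psi_i(x):=\min\{(f(x)-t_{i-1})_+,\,(t_i-f(x))_+\},\qquad i=1,\ldots,2k.$$
Each $\psi_i$ is Lipschitz, nonnegative, and supported in the open band $B_i:=\{t_{i-1}<f<t_i\}$, so the supports are pairwise disjoint up to null sets. A case distinction on whether $f(x)$ lies below or above the midpoint $(t_{i-1}+t_i)/2$ identifies the nearest threshold to $f(x)$ as $t_{i-1}$ or $t_i$, whence $\psi_i(x)=|f(x)-g_k(x)|$ on $B_i$. Therefore $\sum_{i=1}^{2k}\psi_i^2=(f-g_k)^2$ almost everywhere; since $|\nabla\psi_i|\leq|\nabla f|\cdot\mathbf{1}_{B_i}$, the disjointness of the $B_i$ yields $\sum_{i=1}^{2k}\int_M|\nabla\psi_i|^2\,d\mu\leq\int_M|\nabla f|^2\,d\mu$.

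The central technical step, which I take to be the Claim alluded to in the introduction, is a balanced choice of thresholds making the $2k$ quantities $A_i:=\int_M\psi_i^2\,d\mu$ all equal to $A:=\|f-g_k\|_{L^2(\mu)}^2/(2k)$. Since $(t_1,\ldots,t_{2k-1})\mapsto(A_1,\ldots,A_{2k})$ is continuous, this amounts to solving the $2k-1$ equations $A_j=A_{j+1}$ in $2k-1$ unknowns on the simplex of increasing threshold sequences; a standard intermediate-value / Brouwer-type argument should provide a solution, with the only subtlety being the accommodation of atoms of the push-forward measure $f_*\mu$ (where one may need to place several thresholds at the same point). This equalisation is precisely where the continuous setting affords a cleaner treatment than the graph case of Kwok et al.

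Given such thresholds, the conclusion follows from Courant--Fischer. Because the $\psi_i$ are disjointly supported, the Rayleigh quotient of any nonzero linear combination $\sum_ic_i\psi_i$ is a convex combination of the individual $\mathcal{R}(\psi_i)$. If $k+1$ of the $\psi_i$ satisfied $\mathcal{R}(\psi_i)<\lambda_k(M,\mu)$, the corresponding $(k+1)$-dimensional span would have maximum Rayleigh quotient strictly below $\lambda_k(M,\mu)$, violating $\lambda_k(M,\mu)=\min_{\dim V=k+1}\max_{u\in V\setminus\{0\}}\mathcal{R}(u)$. Hence at least $k$ indices form a ``good'' set $I$ with $A_i\leq\lambda_k(M,\mu)^{-1}\int_M|\nabla\psi_i|^2\,d\mu$, and combining with the balanced equality $A_i=A$ gives
$$\frac{1}{2}\|f-g_k\|_{L^2(\mu)}^2=kA\leq\sum_{i\in I}A_i\leq\frac{1}{\lambda_k(M,\mu)}\sum_{i\in I}\int_M|\nabla\psi_i|^2\,d\mu\leq\frac{1}{\lambda_k(M,\mu)}\int_M|\nabla f|^2\,d\mu,$$
which is (\ref{eq:approximation}). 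The principal obstacle is the balancing step: making the intermediate-value argument rigorous, including the edge cases caused by atoms of $f_*\mu$, is the main place where genuine work is needed and is what the continuous-space Claim is designed to handle.
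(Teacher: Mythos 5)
Your band functions $\psi_i$, the identity $\sum_i\psi_i^2=(f-g_k)^2$, the gradient bound $\sum_i\int_M|\nabla\psi_i|^2\,d\mu\le\int_M|\nabla f|^2\,d\mu$, and the min--max step (at most $k$ of the disjointly supported $\psi_i$ can have Rayleigh quotient below $\lambda_k$) are all sound, and your final computation does produce the constant $2$. The genuine gap is the one you flag yourself: the existence of thresholds equalising all $2k$ quantities $A_i$. This is not a one-dimensional intermediate-value problem; it is a system of $2k-1$ nonlinear equations, and the natural greedy parametrisation ``fill each band to a common budget $c$'' is not continuous in $c$ when the band-energy $E(a,\cdot):=\int_{\{a<f\le\cdot\}}|f-\psi_{a,\cdot}(f)|^2\,d\mu$ has flat pieces. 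A correct treatment would need something like a Sperner/degree argument on the simplex of threshold vectors (the map $(t_1,\ldots,t_{2k-1})\mapsto(A_1,\ldots,A_{2k})/\sum A_i$ sends each codimension-one face $\{t_{i-1}=t_i\}$ into the face $\{y_i=0\}$, which forces surjectivity onto the barycentre), and even then one must handle the degenerate case where some $A_i$ is forced to vanish. You have not supplied this, and it is the crux of your proof.

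The paper avoids balancing altogether, and this is the real difference between the two arguments. It fixes the budget \emph{in advance} as $C_0=\int_M|\nabla f|^2\,d\mu/(k\lambda_k(M,\mu))$ --- i.e.\ in terms of the right-hand side of \eqref{eq:approximation}, not in terms of the unknown $\|f-g_k\|_{L^2(\mu)}^2$ --- and greedily lays down thresholds so that each band has error exactly $C_0$. If the $2k$ bands reach $T$, the total error is $\le 2kC_0$, which is exactly \eqref{eq:approximation}. If they do not, one obtains $2k$ disjointly supported nonzero Lipschitz functions $f^i$ with $\|f^i\|_{L^2(\mu)}^2=C_0$ and $\sum_i\mathcal{R}(f^i)\le k\lambda_k$; the paper's Claim (which is about Rayleigh quotients, not about equalising energies --- you have misread that pointer) shows $k+1$ of them satisfy $\mathcal{R}(f^i)<\lambda_k$, and min--max then gives the contradiction $\lambda_k<\lambda_k$. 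So the idea that makes the continuous setting cleaner than Kwok et al.\ is this ``budget-or-contradiction'' device together with the observation that a Lipschitz function with zero Rayleigh quotient supported in a proper band must vanish --- not a balancing of the step heights, which is where your proposal stalls.
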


\begin{proof}
We construct $t_0,t_1,\ldots,t_{2k}$ inductively. First, set $t_0=0$. Suppose that we have already fixed the values of $t_0, t_1, \ldots, t_{i-1}$.
Then we find the value of $t_i$ in the following way. If there exists $t\geq t_{i-1}$ such that
\begin{equation}\label{eq:procedurebarrier}
\int_{\{x: t_{i-1}<f(x)\leq t\}}|f(x)-\psi_{t_{i-1},t}(f(x))|^2d\mu(x)=C_0:=\frac{\int_M|\nabla f(x)|^2d\mu(x)}{k\lambda_k(M,\mu)},
\end{equation}
we set $t_i$ to be the smallest one of such $t$; otherwise, we set $t_i=T$.

If $t_{2k}=T$, then the approximation $g_k:=\psi_{t_0,t_1,\ldots,t_{2k}}(f(x))$ would satisfy (\ref{eq:approximation}) and the proof is completed. It only remains to show that $t_{2k}<T$ leads to a contradiction.

Assume that $t_{2k}<T$. Then we can construct $2k$ nonnegative disjointly supported Lipschitz functions  $\{f^i\}_{i=1}^{2k}$ as follows:
\begin{equation*}
f^i(x):=\left\{
          \begin{array}{ll}
            |f(x)-\psi_{t_{i-1},t_i}(f(x))|, & \hbox{if $t_{i-1}<f(x)\leq t_i$;} \\
            0, & \hbox{otherwise.}
          \end{array}
        \right.
\end{equation*}
Since we have for any $x,y$ with $t_{i-1}<f(x),f(y)\leq t_i$,
\begin{equation*}
 \left||f(x)-\psi_{t_{i-1},t_i}(f(x))|-|f(y)-\psi_{t_{i-1},t_i}(f(y))|\right|\leq |f(x)-f(y)|,
\end{equation*}
we conclude that
\begin{equation}
 \sum_{i=1}^{2k}\int_M|\nabla f^i(x)|^2d\mu(x)\leq\int_M|\nabla f(x)|^2d\mu(x).
\end{equation}
Observing $\Vert f^i\Vert^2_{L^2(\mu)}=C_0$ by (\ref{eq:procedurebarrier}), we obtain
\begin{equation}\label{eq:forClaim}
 \sum_{i=1}^{2k}\mathcal{R}(f^i)\leq \frac{1}{C_0}\int_M|\nabla f(x)|^2d\mu(x)=k\lambda_k(M,\mu).
\end{equation}
The last equality above follows directly from the definition of $C_0$.
\begin{claim}\label{claim}
 We can find $k+1$ functions from $\{f^i\}_{i=1}^{2k}$, denoted by $f^1,\ldots, f^{k+1}$ after relabeling, such that
\begin{equation*}
  \mathcal{R}(f^j)<\lambda_k(M,\mu), \,\,j=1,2,\ldots,k+1.
\end{equation*}
\end{claim}
If this claim were false, then there would exist at least $k$ functions of $\{f^i\}_{i=1}^{2k}$, say $f^{i_1},\ldots, f^{i_{k}}$, such that
\begin{equation*}
 \mathcal{R}(f^{i_l})\geq \lambda_k(M,\mu), \,\,l=1,2,\ldots,k.
\end{equation*}
Together with (\ref{eq:forClaim}), this would imply that $k$ functions of $\{f^i\}_{i=1}^{2k}$ have vanishing Rayleigh quotients. Hence $f^i$ is a constant function for some $i$. By definition, this can only be the zero function. This contradicts the fact that $f$ is Lipschitz continuous.

Now by the min-max principle (see e.g. \cite[III 27]{Berard86}), we have
\begin{align}
 \lambda_k(M,\mu)\leq \sup_{(\alpha_j)\in \mathbb{R}^{k+1}}\mathcal{R}\left(\sum_{j=1}^{k+1}\alpha_jf^j\right)
 =\max_{1\leq j\leq k+1}\mathcal{R}(f^j)<\lambda_k(M,\mu),\label{eq:minmax}
\end{align}
which is the required contradiction.
\end{proof}
\begin{proof}[Proof of Theorem \ref{thm:ImproveCheeger}]
We apply Theorem \ref{thm:mainfunctionalversion} to the two functions provided by Lemma \ref{lem:smallerLGT}. Recalling the definition \ref{eq:MultiwayCheeger}) for the Cheeger constant $h_1(M,\mu)$, we prove (\ref{eq:ImprovedCheeger}).
\end{proof}
Employing Lemma \ref{lem:LGT} instead of Lemma \ref{lem:smallerLGT}, we arrive at the following extension of Corollary 1 (i) in \cite{KLLGT2013}. It is considered to be an improved version of the higher-order Cheeger inequality in \cite{LGT2013,Funano2013,Miclo2013}.

\begin{thm}\label{thm:ImprovedHigher}
For any closed weighted Riemannian manifold $(M,\mu)$ and any two natural numbers $l,k$, there exists a numeric constant $C>0$ such that
\begin{equation}
 h_k(M,\mu)\leq Clk^6\frac{\lambda_k(M,\mu)}{\sqrt{\lambda_l(M, \mu)}}.
\end{equation}
\end{thm}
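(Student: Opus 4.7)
The plan is to combine the functional inequality of Theorem \ref{thm:mainfunctionalversion} with the existence of $k+1$ disjointly supported test functions from Lemma \ref{lem:LGT}, much as Theorem \ref{thm:ImproveCheeger} was obtained from Theorem \ref{thm:mainfunctionalversion} together with Lemma \ref{lem:smallerLGT}.

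First, I would observe that Theorem \ref{thm:mainfunctionalversion} is stated with a fixed natural number, but its proof uses only the inequality $\phi(f)\le 8k\sqrt{\mathcal{R}(f)}\,\|f-g_k\|_{L^2(\mu)}/\|f\|_{L^2(\mu)}$ combined with Lemma \ref{lemma3.3}; both ingredients are parametrised by the choice of the index used in the step-function approximation. Thus the argument yields, for any natural number $l$ and any nonnegative Lipschitz $f$,
\begin{equation*}
\phi(f)\;\le\; 8\sqrt{2}\,l\,\frac{\mathcal{R}(f)}{\sqrt{\lambda_l(M,\mu)}}.
\end{equation*}
This is the version of the functional Cheeger-type bound that I will use.

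Second, I would invoke Lemma \ref{lem:LGT} to produce $k+1$ nonnegative, pairwise disjointly supported Lipschitz functions $f_0,f_1,\ldots,f_k$ and a universal constant $C_1$ with $\mathcal{R}(f_i)\le C_1 k^{6}\lambda_k(M,\mu)$ for every $0\le i\le k$. Applying the functional inequality above to each $f_i$ gives
\begin{equation*}
\phi(f_i)\;\le\; 8\sqrt{2}\,l\,\frac{C_1 k^{6}\lambda_k(M,\mu)}{\sqrt{\lambda_l(M,\mu)}},\qquad 0\le i\le k.
\end{equation*}

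Third, I would transfer these functional estimates to a multi-way isoperimetric estimate. By definition of $\phi(f_i)=\inf_{t\ge 0}\phi(M_{f_i}(t))$, for every $\varepsilon>0$ I can choose $t_i>0$ so that $\phi(M_{f_i}(t_i))\le \phi(f_i)+\varepsilon$. Because the supports of the $f_i$ are pairwise disjoint, the superlevel sets $A_i:=M_{f_i}(t_i)$ form $k+1$ pairwise disjoint Borel sets of positive measure. Plugging this collection into the definition (\ref{eq:MultiwayCheeger}) of $h_k(M,\mu)$ and letting $\varepsilon\to 0$ yields
\begin{equation*}
h_k(M,\mu)\;\le\;\max_{0\le i\le k}\phi(f_i)\;\le\; 8\sqrt{2}\,C_1\, l\,k^{6}\frac{\lambda_k(M,\mu)}{\sqrt{\lambda_l(M,\mu)}},
\end{equation*}
which is the asserted inequality with $C=8\sqrt{2}\,C_1$.

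I expect no serious obstacle: the only subtle point is verifying that Theorem \ref{thm:mainfunctionalversion} really holds with an arbitrary index $l$, which amounts to rereading the proof of Lemma \ref{lemma3.3} and replacing $k$ by $l$ throughout. Everything else is a direct plug-in of Lemma \ref{lem:LGT} and the observation that disjoint supports yield a feasible partition for $h_k$.
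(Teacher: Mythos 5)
Your proposal is correct and follows exactly the route the paper indicates: apply the functional inequality of Theorem \ref{thm:mainfunctionalversion} (with index $l$) to the $k+1$ disjointly supported test functions produced by Lemma \ref{lem:LGT}, then pass to superlevel sets to bound $h_k(M,\mu)$. The paper states this in one sentence (``Employing Lemma \ref{lem:LGT} instead of Lemma \ref{lem:smallerLGT}\ldots''), and your write-up supplies the same missing details, including the observation that the parameter in Lemma \ref{lemma3.3} and Theorem \ref{thm:mainfunctionalversion} is free and may be set to $l$.
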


\section{Applications and extensions}
\subsection{Multi-way isoperimetric constants} Theorem \ref{thm:Main} has very interesting applications. Combining it with Buser-Ledoux inequality (\ref{eq:LedouxBuser}), we obtain the following improvement of Funano's higher-order Buser-Ledoux inequality (Theorem 1.7 in \cite{Funano2013}).
\begin{thm}\label{thm:higerBuserLedoux}
 On a closed Riemannian manifold $(M,\mu)$ of nonnegative Bakry-\'{E}mery Ricci curvature, we have for any natural number $k$
\begin{equation}\label{eq29}
 h_k(M,\mu)\geq h_1(M,\mu)\geq\frac{(e-1)^2}{16\sqrt{2}e^2}\frac{1}{k}\sqrt{\lambda_k(M,\mu)}.
\end{equation}
\end{thm}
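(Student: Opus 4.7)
The plan is to assemble the statement from two ingredients already at our disposal: the main theorem (Theorem \ref{thm:Main}) and the dimension-free Buser-Ledoux inequality (Theorem \ref{thm:LedouxBuser}). The first inequality $h_k(M,\mu)\geq h_1(M,\mu)$ is essentially tautological: given any $(k+1)$-tuple $A_0,\dots,A_k$ of pairwise disjoint nonempty Borel sets of positive measure, the pair $(A_0,A_1)$ alone is admissible for $h_1$, so $h_1(M,\mu)\leq \max(\phi(A_0),\phi(A_1))\leq \max_{0\leq i\leq k}\phi(A_i)$; taking the infimum over admissible $(k+1)$-tuples then yields $h_1\leq h_k$. (This is the same monotonicity already remarked after Definition \ref{def:MultiwayCheeger}.)

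For the second inequality I would first rearrange Theorem \ref{thm:Main} into the form
\[
\sqrt{\lambda_1(M,\mu)}\;\geq\;\frac{e-1}{16e}\cdot\frac{\sqrt{\lambda_k(M,\mu)}}{k},
\]
and then feed this into the dimension-free Buser-Ledoux lower bound $h_1(M,\mu)\geq \frac{e-1}{\sqrt{2}e}\sqrt{\lambda_1(M,\mu)}$ from Theorem \ref{thm:LedouxBuser}. Multiplying the two numerical factors produces exactly $\frac{(e-1)^2}{16\sqrt{2}e^2}$, so one obtains
\[
h_1(M,\mu)\;\geq\;\frac{(e-1)^2}{16\sqrt{2}e^2}\cdot\frac{\sqrt{\lambda_k(M,\mu)}}{k},
\]
which, combined with $h_k\geq h_1$, gives \eqref{eq29}.

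There is no genuine obstacle: the calibration of the constants in Theorems \ref{thm:Main} and \ref{thm:LedouxBuser} has been arranged precisely so that their composition yields the clean factor appearing in \eqref{eq29}. The only point to verify is that the hypothesis of nonnegative Bakry-\'{E}mery Ricci curvature required by Theorem \ref{thm:LedouxBuser} (and implicitly by Theorem \ref{thm:Main} through its dependence on that inequality) is exactly the assumption made in Theorem \ref{thm:higerBuserLedoux}, so no further curvature bound is needed.
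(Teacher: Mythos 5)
Your proposal is correct and follows precisely the paper's own argument: monotonicity $h_k\geq h_1$, then composing the dimension-free Ledoux--Buser bound (Theorem \ref{thm:LedouxBuser}) with the rearranged main estimate (Theorem \ref{thm:Main}) to produce the constant $\frac{(e-1)^2}{16\sqrt{2}e^2}$. Nothing to add.
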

We remark that the constant above $\frac{(e-1)^2}{16\sqrt{2}e^2k}\geq \frac{1}{57k}$ improves the $\frac{1}{80k^3}$ in \cite{Funano2013}.
\begin{proof}
The formula (\ref{eq29}) follows from
\begin{equation*}
 h_k(M,\mu)\geq h_1(M,\mu)\geq \frac{e-1}{\sqrt{2}e}\sqrt{\lambda_1(M,\mu)}\geq\frac{e-1}{\sqrt{2}e}\cdot\frac{e-1}{16ek}\sqrt{\lambda_k(M,\mu)},
\end{equation*}
where we used monotonicity of $h_k(M,\mu)$, Theorem \ref{thm:LedouxBuser} and Theorem \ref{thm:Main}.
\end{proof}

We obtain the following multi-way isoperimetric constant ratio estimate, which improves Funano's exponential upper bound (Theorem 1.6 in \cite{Funano2013}) to a polynomial bound.
\begin{thm}\label{thm:MIsoperiratio}
 There exists a universal numeric constant $C>0$ such that if $(M, \mu)$ is a closed weighted Riemannian manifold of nonnegative Bakry-\'{E}mery Ricci curvature and $k$ is a natural number, then we have
\begin{equation}\label{eq:Isoperratio}
 h_k(M,\mu)\leq Ck\sqrt{\log(1+k)}h_1(M,\mu).
\end{equation}
\end{thm}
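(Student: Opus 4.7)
The plan is to derive \eqref{eq:Isoperratio} from the three-step chain
\begin{equation*}
h_k(M,\mu)\,\le\,C_1\sqrt{\log(1+k)}\,\sqrt{\lambda_k(M,\mu)}\,\le\,C_2\,k\sqrt{\log(1+k)}\,\sqrt{\lambda_1(M,\mu)}\,\le\,Ck\sqrt{\log(1+k)}\,h_1(M,\mu),
\end{equation*}
in which the second inequality is the square root of Theorem \ref{thm:Main} and the third is Theorem \ref{thm:LedouxBuser} rewritten as $\sqrt{\lambda_1(M,\mu)}\le\frac{\sqrt{2}e}{e-1}h_1(M,\mu)$. These two intermediate steps are immediate, so the whole task is concentrated in the first inequality, a higher-order Buser-type bound with a $\sqrt{\log(1+k)}$ constant valid under nonnegative Bakry-\'{E}mery Ricci curvature.

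For that first inequality I would follow the scheme Funano uses in the proof of \cite[Theorem 1.6]{Funano2013} but substitute the polynomial eigenvalue-ratio estimate of Theorem \ref{thm:Main} for his exponential estimate \eqref{eq:FunanoShioya}. Concretely, I would take the $k+1$ nonnegative disjointly supported Lipschitz functions $f_0,\ldots,f_k$ provided by Lemma \ref{lem:LGT} (so that $\mathcal{R}(f_i)\le Ck^6\lambda_k(M,\mu)$) and estimate each $\phi(f_i)$ by combining the co-area bound of Lemma \ref{lem:cheeger-coarea} with Ledoux's Gromov-Milman-type exponential concentration of measure
\begin{equation*}
\mu\bigl(\{x\in M: |f(x)-\mathrm{med}(f)|>r\}\bigr)\,\le\,2e^{-cr\sqrt{\lambda_1(M,\mu)}},
\end{equation*}
which holds for any $1$-Lipschitz $f$ on $(M,\mu)$ under $\Ric_\mu\ge 0$. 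Truncating each $f_i$ at a level of order $\sqrt{\log(1+k)}/\sqrt{\lambda_1(M,\mu)}$ absorbs the polynomial prefactor $Ck^6$ into a single logarithmic factor, yielding $\phi(f_i)\le C_1\sqrt{\log(1+k)}\sqrt{\lambda_k(M,\mu)}$; taking the maximum over $0\le i\le k$ then completes the first inequality.

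The main obstacle is the constant-chasing through this truncation step: one must check both that the truncated function still has its Rayleigh quotient controlled by $\lambda_k(M,\mu)$ and that the disjointness of the supports of $f_0,\ldots,f_k$ is preserved. Funano essentially carried out this scheme in \cite{Funano2013} but paid an exponential price in $k$ through his weaker eigenvalue-ratio input; replacing that input with our Theorem \ref{thm:Main} retains the logarithmic gain from concentration while cutting the $k$-dependence from $e^{ck}$ down to the advertised $k\sqrt{\log(1+k)}$. Because every ingredient (Theorem \ref{thm:Main}, Theorem \ref{thm:LedouxBuser}, Lemma \ref{lem:LGT}, Ledoux's concentration bound) is dimension-free, the resulting constant $C$ in \eqref{eq:Isoperratio} is a universal numeric constant.
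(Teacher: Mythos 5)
Your overall three-step chain matches the paper's strategy, and steps two and three (Theorem~\ref{thm:Main} and Theorem~\ref{thm:LedouxBuser}) are used exactly as in the paper's proof. The gap is in the first inequality, $h_k(M,\mu)\le C_1\sqrt{\log(1+k)}\,\sqrt{\lambda_k(M,\mu)}$. The paper obtains its version of this step not from Lemma~\ref{lem:LGT} plus concentration, but from the \emph{shifted} higher-order Cheeger inequality
\begin{equation*}
h_k(M,\mu)\leq C_1\sqrt{\lambda_{2k}(M,\mu)\log(1+k)},
\end{equation*}
quoted from Theorem~4.1 in \cite{LGT2013} and Theorem~12 in \cite{Miclo2013}, then applies Theorem~\ref{thm:Main} to $\lambda_{2k}$. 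That shifted inequality is a curvature-free spectral result whose known proof relies on a smooth random partitioning argument, and the paper even remarks that a direct Riemannian proof of it is still unknown; it is precisely this result that delivers the $\sqrt{\log(1+k)}$ factor in front of an eigenvalue.

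Your alternative route for that first step does not close the gap. Lemma~\ref{lem:LGT} only yields disjointly supported functions with $\mathcal{R}(f_i)\le Ck^6\lambda_k(M,\mu)$, and the standard co-area argument (applied to $f_i^2$) then gives $\phi(f_i)\le 2\sqrt{\mathcal{R}(f_i)}\le Ck^3\sqrt{\lambda_k(M,\mu)}$, a \emph{polynomial} rather than logarithmic prefactor. The claim that truncating each $f_i$ at height of order $\sqrt{\log(1+k)}/\sqrt{\lambda_1(M,\mu)}$ ``absorbs the polynomial prefactor $Ck^6$ into a single logarithmic factor'' is not justified: Ledoux's concentration estimate controls the measure of superlevel sets of a $1$-Lipschitz function, but it does not remove the $k^6$ factor sitting in the Rayleigh quotient bound of Lemma~\ref{lem:LGT}, nor are the $f_i$ normalized to be $1$-Lipschitz, and no mechanism is given by which truncation improves the Rayleigh-quotient-to-Cheeger-constant passage. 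Carried out as written, your scheme would at best give $h_k(M,\mu)\le Ck^4\,h_1(M,\mu)$ (or some other polynomial of higher degree), not the advertised $Ck\sqrt{\log(1+k)}\,h_1(M,\mu)$. To recover the stated bound you need to invoke the shifted higher-order Cheeger inequality as the paper does, rather than re-deriving it from Lemma~\ref{lem:LGT} and concentration.
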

\begin{proof}
We need the following shifted higher-order Cheeger inequality (Theorem 4.1 in \cite{LGT2013} and Theorem 12 in \cite{Miclo2013})
\begin{equation}\label{eq:shiftedCheeger}
 h_k(M,\mu)\leq C_1\sqrt{\lambda_{2k}(M,\mu)\log(1+k)}
\end{equation}
with some universal numeric constant $C_1>0$.
Combining (\ref{eq:shiftedCheeger}) with Theorem \ref{thm:Main} and Theorem \ref{thm:LedouxBuser}, we prove (\ref{eq:Isoperratio}).
\end{proof}
\begin{remark}
The shifted higher-order Cheeger inequality (\ref{eq:shiftedCheeger}) was proved in finite graph setting by Lee, Oveis Gharan and Trevisan \cite{LGT2013}. This was then extended to Riemannian setting via a probability theoretic approximation procedure by Miclo \cite{Miclo2013}. However, a direct proof of it in Riemannian setting seems to be still unknown (see \cite{Funano2013}).
\end{remark}
\begin{remark}
 In the above two results, estimates of the two ratios, $$\sqrt{\lambda_k(M,\mu)}/h_k(M,\mu)\text{  and  }h_k(M,\mu)/h_1(M,\mu),$$ are improved. However, the author does not know whether they have the optimal order of $k$ or not (see Question 6.3 in Funano \cite{Funano2013}). Observe in Example \ref{example} that we have $h_k(T_L^1)=2k/L$ (by an argument similar to Proposition 7.3 in \cite{Liu13}). Therefore for this example, the first ratio is of order $0$ and the second is of order $1$ in $k$.
\end{remark}

\subsection{Observable diameter}
Observable diameter with parameter $\kappa>0$ is an important concept introduced by Gromov \cite{Gromov99}. For $(M,\mu)$, the partial diameter $\text{diam}(\mu; 1-\kappa)$ is defined as the infimum of the diameter of $A$ over all Borel subsets $A\subseteq M$ with $\mu(A)\geq 1-\kappa$. The observable diameter is then defined as
\begin{equation*}
 \text{ObsDiam}((M,\mu);-\kappa):=\sup\{\text{diam}(f_*\mu;1-\kappa)\},
\end{equation*}
where the supremum is taken over all $1$-Lipschitz functions $f: M\rightarrow \mathbb{R}$. This constant is important for characterizing measure concentration phenomena. Funano \cite{Funano2013} obtained the following version of the Gromov-Milman \cite{GM83} inequality:
\begin{equation}\label{eq:GromovMilman}
 \text{ObsDiam}((M,\mu);-\kappa)\leq \frac{6}{\sqrt{\lambda_1(M,\mu)}}\log\frac{2}{\kappa}.
\end{equation}
Combining (\ref{eq:GromovMilman}) with Theorem \ref{thm:Main} provides the following result, which can be considered as a dimension-free Cheng's inequality.
\begin{thm}\label{thm:higerGromovMilman}
For any closed weighted Riemannian manifold $(M,\mu)$ of nonnegative Bakry-\'{E}mery Ricci curvature and any natural number $k$, we have
\begin{equation}\label{eq:dimenfreeCheng}
 \text{ObsDiam}((M,\mu);-\kappa)\leq \frac{152k}{\sqrt{\lambda_k(M,\mu)}}\log\frac{2}{\kappa}.
\end{equation}
\end{thm}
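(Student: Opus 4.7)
The plan is to chain two already-established inequalities; the theorem is essentially a one-line corollary of Theorem~\ref{thm:Main} combined with Funano's Gromov--Milman-type bound~(\ref{eq:GromovMilman}).

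First I would take Theorem~\ref{thm:Main}, namely $\lambda_k(M,\mu)\leq\bigl(\tfrac{16e}{e-1}\bigr)^{2}k^{2}\lambda_1(M,\mu)$, and rearrange it as
$$\sqrt{\lambda_1(M,\mu)}\geq\frac{e-1}{16e}\cdot\frac{\sqrt{\lambda_k(M,\mu)}}{k},$$
or, equivalently in the form needed below,
$$\frac{1}{\sqrt{\lambda_1(M,\mu)}}\leq\frac{16e}{e-1}\cdot\frac{k}{\sqrt{\lambda_k(M,\mu)}}.$$
This is the single step where the nonnegativity of the Bakry--\'{E}mery Ricci curvature enters, through the hypothesis of Theorem~\ref{thm:Main}.

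Next I would substitute this into Funano's inequality~(\ref{eq:GromovMilman}) to get
$$\text{ObsDiam}((M,\mu);-\kappa)\leq\frac{6}{\sqrt{\lambda_1(M,\mu)}}\log\frac{2}{\kappa}\leq\frac{96\,e}{e-1}\cdot\frac{k}{\sqrt{\lambda_k(M,\mu)}}\log\frac{2}{\kappa}.$$
A direct numerical check, using $e\approx 2.71828$, gives $96e/(e-1)<151.88<152$, which yields exactly the advertised constant in~(\ref{eq:dimenfreeCheng}).

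There is no genuine obstacle here: all of the substantive content sits in Theorem~\ref{thm:Main} (which itself rests on the dimension-free improved Cheeger inequality of Theorem~\ref{thm:ImproveCheeger} and the Ledoux--Buser inequality of Theorem~\ref{thm:LedouxBuser}) and in Funano's derivation of~(\ref{eq:GromovMilman}). The only mild care required is in tracking the numerical constant so that the true factor $96e/(e-1)$ indeed fits under the clean integer bound $152$; this is why the statement uses $152$ rather than a tighter but uglier constant.
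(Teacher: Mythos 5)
Your proposal is correct and is precisely the paper's argument: the paper states that the result follows by ``combining (\ref{eq:GromovMilman}) with Theorem \ref{thm:Main},'' which is exactly the two-step substitution you carry out, and your numerical check that $96e/(e-1)\approx 151.9<152$ confirms the stated constant.
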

Note that $152k$ here improves the constant $e^{ck}$ in Corollary 1.3 of \cite{Funano2013}. Recall that assuming nonnegativity of Ricci curvature, Cheng (Corollary 2.2 in \cite{Cheng75}) proved
\begin{equation}\label{eq:cheng75}
\text{diam}(M)\leq \frac{\sqrt{2n(n+4)}k}{\sqrt{\lambda_k(M)}},
\end{equation}
for any natural number $k$,
where $n$ is the dimension of $M$.
Kei Funano pointed out to us that (\ref{eq:dimenfreeCheng}) implies (\ref{eq:cheng75}) up to a universal constant, using the equivalence of observable diameter and separation distance (see e.g. Lemma 5.6 in \cite{Funano2013}) and an argument of Chung, Grigor'yan and Yau (see (3.15) in \cite{CGY}). Observe that one can not expect an exact dimension-free version of (\ref{eq:cheng75}) in view of the example of unit spheres $\{S^n\}_{n=1}^{\infty}$. Actually, we have $\lambda_1(S^n)=n$ \cite{GM83}.


\subsection{Extensions to Alexandrov spaces}\label{subsection:Exten}
An Alexandrov space $(X,d)$ of curvature bounded from below is a complete geodesic metric space which satisfies locally the Toponogov triangle comparison theorem for sectional curvature.
The Hausdorff dimension of an Alexandrov space is an integer or infinity. See \cite{BBI01,BGP,OS94} for more details about Alexandrov geometry. In this subsection we consider a compact $n$-dimensional Alexandrov space $X$ equipped with the $n$-dimensional Hausdorff measure $\mathcal{H}^n$. A detailed review of the Sobolev space $W^{1,2}(\mathcal{H}^n)$ of an Alexandrov space can be found in \cite[Section 2]{GKK13}. Note that the set of Lipschitz functions is dense in $W^{1,2}(\mathcal{H}^n)$ (see \cite[Theorem 1.1]{KMS01}). For a Lipshitz function $f: X\rightarrow \mathbb{R}$, the weak gradient vector $\nabla f(x)$ satisfies
\begin{equation}\label{eq:gradient}
 \sqrt{\langle\nabla f(x), \nabla f(x)\rangle}=\limsup_{y\rightarrow x}\frac{|f(x)-f(y)|}{d(x,y)}, \,\,a.e. \,\,x.
\end{equation}
(See \cite{KMS01} for precise definitions of the weak gradient vector and the inner product $\langle\cdot, \cdot\rangle$.)
The Dirichlet energy is defined as $$\mathcal{E}(f,g):=\int_X \langle\nabla f, \nabla g\rangle d\mathcal{H}^n,\,\,\text{ for }f,g\in W^{1,2}(\mathcal{H}^n).$$
This definition coincides with Cheeger's energy functional \cite{Cheeger99} in terms of minimal generalized upper gradient.

The spectral theory of the Laplacian $\Delta_X$ associated to $(\mathcal{E}, W^{1,2}(\mathcal{H}^n))$ on an Alexandrov space was studied in \cite{KMS01}. In particular, $\Delta_X$ has discrete spectrum consisting of eigenvalues
\begin{equation*}
 0=\lambda_1(X)<\lambda_2(X)\leq \cdots\leq\lambda_k(X)\leq\cdots\nearrow\infty
\end{equation*}
with finite multiplicity. Moreover, the corresponding eigenfunctions are Lipschitz continuous (Theorem 4.4 of \cite{GKK13}). The min-max principle we used in (\ref{eq:minmax}) generalizes to the setting of Alexandrov spaces via a standard argument.

Note that the co-area inequality of Bobkov and Houdr\'{e} \cite{BH97} holds in a general metric measure space where the measure is not atomic, and the modulus of a function's gradient there coincides with (\ref{eq:gradient}).
Bearing in mind those facts mentioned above,
we see that Theorem \ref{thm:ImproveCheeger} extends readily to a compact finite-dimensional Alexandrov space of curvature bounded from below.

As pointed out by Funano (Remark 4.5 in \cite{Funano2013}), employing the Bakry-\'{E}mery type gradient estimate for the heat kernel on an Alexandrov space due to Gigli, Kuwada and Ohta \cite{GKK13}, we conclude that Theorem \ref{thm:LedouxBuser} holds also on a compact finite-dimensional Alexandrov space of nonnegative curvature. This then confirms Theorem \ref{thm:AleNonnegative} in the setting of Alexandrov spaces.

We remark that Theorem \ref{thm:AleNonnegative} is still valid for a weighted Alexandrov space satisfying $CD(0,\infty)$ in the sense of Lott-Sturm-Villani (\cite{LV,Sturm1}), since Gigli-Kuwada-Ohta gradient estimate extends to this setting (see \cite{GKK13}).


\section*{Acknowledgements}
I am very grateful to Norbert Peyerimhoff for many helpful discussions. I thank Kei Funano, Alexander Grigor'yan, J\"{u}rgen Jost, and  Feng-Yu Wang for their interest and valuable comments. Special thanks go to Daniel Grieser for pointing out Example \ref{exampleThinTorus} and Martin Kell for pointing out the possibility of extending the results to compact metric measure spaces satisfying the Riemannian curvature-dimension conditions $RCD^*(0, N)$ (see \cite{EKS}).
This work was supported by the EPSRC Grant EP/K016687/1.

\end{document}